\numberwithin{equation}{section}
\numberwithin{figure}{section}
\theoremstyle{plain}
\newtheorem{thm}{\protect\theoremname}[section]
\theoremstyle{definition}
\newtheorem{defn}[thm]{\protect\definitionname}
\theoremstyle{plain}
\newtheorem{lem}[thm]{\protect\lemmaname}
\theoremstyle{remark}
\newtheorem{rem}[thm]{\protect\remarkname}
\theoremstyle{remark}
\newtheorem*{acknowledgement*}{\protect\acknowledgementname}
\theoremstyle{plain}
\newtheorem{prop}[thm]{\protect\propositionname}
\theoremstyle{definition}
\newtheorem{example}[thm]{Example}
\theoremstyle{plain}
\newtheorem{conj}[thm]{\protect\conjecturename}
\def\rank{\mathrm{rank}}
\def\sl{\mathrm{SL}}
\providecommand{\definitionname}{Definition}
\providecommand{\lemmaname}{Lemma}
\providecommand{\propositionname}{Proposition}
\providecommand{\remarkname}{Remark}
\providecommand{\theoremname}{Theorem}
\providecommand{\conjecturename}{Conjecture}
\date{\today}
\begin{document}
\global\long\def\bbc{\mathbb{C}}
\global\long\def\bbr{\mathbb{R}}
\global\long\def\bbq{\mathbb{Q}}
\global\long\def\bbz{\mathbb{Z}}
\global\long\def\bbn{\mathbb{N}}
\global\long\def\norm#1{\left\Vert #1\right\Vert}
\global\long\def\frakg{\mathfrak{g}}
\global\long\def\fraka{\mathfrak{a}}
\global\long\def\fraku{\mathfrak{u}}
\global\long\def\frakt{\mathfrak{t}}
\global\long\def\fraks{\mathfrak{s}}
\global\long\def\frakn{\mathfrak{n}}

\title[Existence of Non-Obvious Divergent Trajectories]{Existence of Non-Obvious Divergent Trajectories in homogeneous spaces}
\author{Nattalie Tamam}
\address{Mathematics Department, UC San Diego, 9500 Gilman Dr, La Jolla, CA 92093}
\email{natamam@ucsd.edu}

\begin{abstract}
We prove a modified version for a conjecture of Weiss from 2004. Let $G$ be a semisimple real algebraic group defined over $\bbq$, $\Gamma$ be an arithmetic subgroup of $G$. A trajectory in $G/\Gamma$ is divergent
if eventually it leaves every compact subset, and is obvious divergent if there is a finite collection of algebraic data which cause the divergence. 
Let $A$ be a diagonalizable subgroup of $G$ of positive dimension. We show that if the projection of $A$ to any $\bbq$-factor of $G$ is of small enough dimension (relatively to the $\bbq$-rank of the $\bbq$-factor), then there are non-obvious divergent trajectories for the action of $A$ on $G/\Gamma$. 
\end{abstract}

\maketitle

\section{Introduction}

Let $G$ be a semisimple real algebraic group defined over $\bbq$,  $\Gamma$ be an arithmetic subgroup of $G$, and $A\subset G$ be a subgroup. The action of $A$ on $G/\Gamma$ induces a flow on $G/\Gamma$. The behavior of such flows is extensively studied and related to classical problems in number theory (see \cite{key-15}).

For example, it was proved by Dani \cite{D} that divergent trajectories are related to singular systems of linear forms which are studied in the theory of Diophantine approximation.
A trajectory $Ax$ in $G/\Gamma$ is called \textbf{divergent} if the map $a\mapsto ax$, $a\in A$, is proper. 

In some cases one can find a simple algebraic reason for the divergence. 
Let $g\in G$ and let $A\subset G$ be a semigroup. A trajectory $Ag\Gamma$ is called an \textbf{obvious divergent trajectory} if for any unbounded sequence $\left\{ a_{k}\right\} \subset A$ there is a sub-sequence $\left\{ a_{k}^{\prime}\right\} \subset\left\{ a_{k}\right\} $, a $\mathbb{Q}$-representation $\varrho:G\rightarrow\mbox{GL}\left(V\right)$, and a nonzero $v\in V\left(\mathbb{Q}\right)$ such that  \[
\varrho\left(a_{k}^{\prime}g\right)v\underset{k\rightarrow+\infty}{\longrightarrow}0.
\]
A proof that an obvious divergent trajectory is indeed divergent can be found in \cite{w}. Obvious divergent trajectories are related to systems of linear forms with coefficients that lie in a rational hyperspace.  

In \cite{m} it was shown by Margulis that a unipotent subgroup has no divergent trajectories on $G/\Gamma$. Moreover, his argument shows that any quasi-unipotent subsemigroup has no divergent trajectories. Thus, it is natural to study the existence of divergent trajectories and non-obvious divergent trajectories under the action of diagonalizable subgroups of $G$. Since any diagonalizable subgroup of $G$ is a direct product of a compact set and an $\bbr$-diagonalizable subgroup, we focus on the latter. 

Let $T$ be a maximal $\bbr$-split torus in $G$. It was conjectured by Weiss in \cite{w} that the existence of divergent trajectories and non-obvious divergent trajectories for the action of a subgroup of $T$ on $G/\Gamma$ can be deduced from the relation between the $\bbq$-rank of $G$ and the dimension of the subgroup.  

\begin{conj} \label{conj:weiss}\cite[Conjecture 4.11(C)]{w}
	Let $A$ be a subgroup of $T$. If $0<\dim A<\mathrm{rank}_\bbq G$ then there exist non-obvious divergent trajectories for the action of $A$ on $G/\Gamma$.
\end{conj}

A connected algebraic semi-simple $\bbq$-group is called \textbf{almost $\bbq$-simple} if it has no closed connected normal $\bbq$-subgroup of strictly positive dimension. 
It follows from Wiess work that the above conjecture does not hold when $G$ is not an almost $\bbq$-simple group, as we would see next.

Let $G_1, G_2$ be semisimple real algebraic groups defined over $\bbq$ which satisfy \[
\mathrm{rank}_\bbq(G_1)=1,\quad\mathrm{and}\quad \mathrm{rank}_\bbr(G_1)=\mathrm{rank}_\bbq(G_2)=\mathrm{rank}_\bbr(G_2)=2\]
(such groups exist, e.g. Table VI in \cite{he}). 
Let $\Gamma_{1},\Gamma_{2}$ be arithmetic subgroups of $G_1,G_2$, receptively, and $G=G_1\times G_2$, $\Gamma=\Gamma_{1}\times\Gamma_{2}$. 
Let $T_1$ be an $\bbr$-maximal split torus of $G_1$, and $A=T_1\times\{e\}$. Then \[
\dim A=2<3=\mathrm{rank}_\bbq G.\] 
However, since \[
\dim A_1=2>1=\mathrm{rank}_\bbq G_1,\] 
according to \cite[Corollary 2]{w Q-rank} there are no divergent trajectories for the action of $A_1$ on $G_1/\Gamma_1$. Thus, there are no divergent trajectories for the action of $A$ on $G/\Gamma$ (see Lemma \ref{lem:factor equiv}).    
 
In this work we show that Conjecture \ref{conj:weiss} holds when $G$ is almost $\bbq$-simple. We also find a sufficient condition for the existence of non-obvious divergent trajectories in the general, which depends on the projection of $A$ onto the $\bbq$-factors of $G$. 

A real algebraic $\bbq$-group $H$ is said to be the \textbf{$\bbq$-almost direct product} of its real algebraic $\bbq$-subgroups $H_{1},\dots,H_{\ell}$ if the map
\begin{align}
	H_{1}\times\cdots\times H_{\ell} & \rightarrow H\label{eq: almost direct product}\\
	\left(h_{1},\dots,h_{\ell}\right) & \mapsto h_{1}\cdots h_{\ell}\nonumber
\end{align} 
is a surjective homomorphism with finite kernel; in particular, this means that the $H_{i}$ commute with each other, and each $H_{i}$ is normal in $H$. The subgroups $H_{1},\dots,H_{\ell}$ are called \textbf{$\bbq$-factors of $H$}. If $T'$ is a maximal $\bbr$-split torus in $H$, then for any $i$, the identity connected component $T'_i:=(T'\cap H_i)^\circ$ is a maximal $\bbr$-split torus of $H_i$. Moreover, $T'$ is the almost direct product of the $T'_i$'s (see \cite[Proposition 22.9]{Bor}). 

We start by showing a sufficient and necessary condition for the existence of divergent trajectories. 

\begin{thm} \label{thm:2}
	Let $A$ be a non-trivial subgroup of $T$. Then, there exist divergent trajectories for the action of $A$ on $G/\Gamma$ if and only if for any $\bbq$-factor $G_1$ of $G$
	\begin{equation} \label{eq:main}
	\dim(A\cap G_1)\leq\mathrm{rank}_\bbq G_1.
	\end{equation}
\end{thm}

The necessity of (\ref{eq:main}) for the existence of divergent trajectories was shown by Weiss, when $G$ is almost $\bbq$-simple (see \cite[Corollary 2]{w Q-rank}).

Taking into consideration the constrains of Theorem \ref{thm:2} and the effect of each $\bbq$-factor on the existence of non-obvious divergent trajectories, our main result is the following modified version of Conjecture \ref{conj:weiss}.

\begin{thm}
	\label{thm:main} Let $A$ be a subgroup of $T$ such that any $\bbq$-factor $G_1$ of $G$ satisfies (\ref{eq:main}) and there exists a $\bbq$-factor $G_2$ of $G$ such that \[
	0<\dim(A\cap G_2)<\mathrm{rank}_\bbq G_1,\]
	then there are non-obvious divergent trajectories for the action of $A$ on $G/\Gamma$.
\end{thm}


When $A$ is one-dimensional, Theorem \ref{thm:main} was proved in \cite[Prop. 4.5]{D} and in \cite[Prop. 3.5]{w} for the case $A$ is not quasi-unipotent (not necessarily a subgroup of $T$). 
For the case $G=\mathrm{SL}_n(\bbr)$, $\Gamma=\mathrm{SL}_n(\bbz)$ and $A$ is a subgroup of the diagonal group in which there is a fixed element on the diagonal that is unbounded for any unbounded sequence of $A$, Theorem \ref{thm:main} was proved in \cite[Corollary 4.14]{w}.   

A main step in the results of Dani and Wiess is showing that non-obvious divergent trajectories can be constructed assuming one can find `enough' obvious reasons for the divergence of the orbit $A\Gamma$. 
The construction is based on ideas of Khintchine \cite{Khintchine}, which were later further developed by Cassels \cite{Cassels}, Dani \cite{D}, and Weiss \cite{w}. We use the same scheme, here in the form of \cite[Theorem 4.13]{w}.  

\subsection{Overview of the paper}
We start by defining almost $\bbq$-anisotropic subgroups and almost $\bbq$-split subtori and show that if a subtorus is of a small enough dimension, then it is conjugate to a maximal almost $\bbq$-split subgroup (Theorem \ref{thm:maximal almost split}) in \S\ref{sec:maximal almost}.  
In \S \ref{sec:Highest Weight Representations} we show Theorem \ref{thm:2} using induction on the number of $\bbq$-factors of $G$.  
The base of the induction contains two implications. One is a theorem of Weiss and the other follows from Theorem \ref{thm:maximal almost split}.

The case $G=\sl_4(\bbr)$, $\Gamma=\sl_4(\bbz)$, and $A=\left\{\exp(\mathrm{diag}(s,-s,t,-t))\::\:s,t\in\bbr\right\}$ is a special case of Theorem \ref{thm:main} which is not considered in \cite[Corollary 4.14]{w}. In \S \ref{sec: example} we show the existence of non-obvious divergent trajectories for it, as a simplified version of the proof.

The proof of Theorem \ref{thm:main} is in \S \ref{sec:Rational Characters and Divergent Trajectories}. It builds on the Khintchine-Cassels-Dani-Weiss scheme, here in the form of Theorem \ref{thm: weiss for epsilon case}, to produce non-obvious divergent trajectories.  
It follows from Theorem \ref{thm:maximal almost split} that when the dimension of $A$ is small enough, one can assume that there is a non-trivial $\bbq$-character which is trivial on $A$, but no non-trivial subgroup of $A$ such that all $\bbq$-characters are trivial on it.
We then use this character to construct 'reasons' for divergence and verify the conditions for applying the scheme.
The main novelty of the proof is using a basic result- that every fundamental weight is conjugated under the Weyl group to a dominant weight, and the lower rank of the subgroup to find 'guaranteed' unbounded $\bbq$-abstract weights and construct these 'reasons' for divergence. 
\newline

\noindent\emph{Acknowledgements:} I would like to thank Barak Weiss for introducing me to this beautiful topic. I am grateful to the anonymous referee for constructive comments and suggestions. The first author was partially supported by the Eric and Wendy Schmidt Fund for Strategic Innovation.

\section{\label{sec:maximal almost}A Maximal Almost $\bbq$-Split Torus}

Let $S$ be a maximal $\bbq$-split torus. By replacing $T$ with a conjugate of it we may assume
$S\subset T$. A \textbf{character} defined on $T$ is a linear functional on it, and we say it is a \textbf{$\bbq$-character} if its restriction to $S$ is not trivial. 

\begin{defn}\label{def:almost}
	Let $A$ be a subgroup of $T$ (not necessarily defined over $\bbq$).
	We say that $A$ is \textbf{almost $\bbq$-anisotropic} if all $\bbq$-character defined on $T$ are trivial on $A$, and that $A$ is \textbf{almost $\bbq$-split} if it does not contain non-trivial almost $\bbq$-anisotropi subgroups. If $A$ is almost $\bbq$-anisotropic (respectively almost $\bbq$-split) and defined over $\bbq$, it is called \textbf{$\bbq$-anisotropic}	(respectively $\bbq$\textbf{-split}). 
\end{defn}
The goal of this chapter is to show that if $G$ is almost $\bbq$-simple, then any subgroup of $T$ is conjugated to a subgroup with an almost $\bbq$-split subtorus of maximal dimension. This will be used later on to prove Theorem \ref{thm:2} and Theorem \ref{thm:main}. 
\begin{thm}\label{thm:maximal almost split}
	Assume $G$ is almost $\bbq$-simple. Let $A$ be a subgroup of $T$ such that $\dim\left(A\right)\leq\text{rank}_{\bbq}\left(G\right)$.
	Then there exists $n\in N_{G}\left(T\right)$ such that $\text{Ad}\left(n\right)A$
	is almost $\bbq$-split.
\end{thm}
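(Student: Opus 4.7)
The plan is to reduce the statement to a linear algebra problem about the Weyl group action on $\frakt$. First I would pass to Lie algebras: let $\fraka$ be the Lie algebra of $A$ and let $\frakt_{0}\subset\frakt$ denote the simultaneous kernel of the differentials of all $\bbq$-characters of $T$. Then $\frakt_{0}$ is the Lie algebra of the maximal almost $\bbq$-anisotropic subgroup of $T$, and it has dimension $\dim\frakt-\text{rank}_{\bbq}(G)$. A subgroup of $T$ is almost $\bbq$-split exactly when its Lie algebra meets $\frakt_{0}$ only at $0$. Since conjugation by $N_{G}(T)$ acts on $\frakt$ through the Weyl group $W=N_{G}(T)/Z_{G}(T)\cong W(\Phi_{\bbr})$, and since the hypothesis $\dim(A)\leq\text{rank}_{\bbq}(G)$ is equivalent to $\dim\fraka+\dim\frakt_{0}\leq\dim\frakt$, the theorem becomes: find $w\in W$ with $w\fraka\cap\frakt_{0}=\{0\}$.

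Next I would exploit the standing assumption that $\Phi_{\bbr}$ is irreducible: the reflection representation of the Weyl group of an irreducible root system is absolutely irreducible, so $W$ acts irreducibly on $\frakt$, and in particular no nonzero proper subspace of $\frakt$ is $W$-invariant. From this I would extract the following general linear algebra claim: if a finite group $W$ acts irreducibly on a finite-dimensional $\bbr$-vector space $V$ and $V_{0}\subsetneq V$, $V_{1}\subset V$ are subspaces with $\dim V_{1}+\dim V_{0}\leq\dim V$, then some $w\in W$ satisfies $wV_{1}\cap V_{0}=\{0\}$. Applied with $V=\frakt$, $V_{0}=\frakt_{0}$, and $V_{1}=\fraka$, this yields the desired $w$, and any lift $n\in N_{G}(T)$ of it satisfies the conclusion of the theorem.

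The heart of the proof is this linear algebra claim, and the main obstacle lies in the case $k:=\dim V_{1}\geq 2$. For $k=1$, irreducibility directly suffices: the orbit $Wv$ of any nonzero $v\in V_{1}$ spans $V$, so cannot lie in $V_{0}$, and any $w$ with $wv\notin V_{0}$ works. For $k\geq 2$, I would work in the Grassmannian: the ``bad'' locus $B=\{[U]\in\text{Gr}(k,V):U\cap V_{0}\neq\{0\}\}$ is a proper Zariski-closed subvariety, and the goal is to show that the finite $W$-orbit of $[V_{1}]$ is not entirely contained in $B$. Translating to the Pl\"ucker embedding, the condition $U\cap V_{0}\neq\{0\}$ becomes the vanishing of $\omega_{U}\in\wedge^{k}V$ under the map $\wedge^{k}\pi$, where $\pi:V\to V/V_{0}$ is the quotient; one then needs the Pl\"ucker vector $\omega_{V_{1}}$ to lie outside the $W$-invariant subspace $\bigcap_{w\in W}w^{-1}\ker(\wedge^{k}\pi)\subset\wedge^{k}V$. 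The difficulty is that $\wedge^{k}V$ is typically not $W$-irreducible, so this intersection can be a nontrivial $W$-submodule; ruling out pathological $V_{1}$ should rest on the finer structure of the reflection representation, for instance invoking the absolute irreducibility together with Burnside density (which gives $\text{span}_{\bbr}(W)=\text{End}(\frakt)$).
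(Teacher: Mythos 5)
Your reduction of the theorem to the statement ``there exists $w\in W\left(\Phi_{\bbr}\right)$ with $w\fraka\cap\frakt_{0}=\left\{ 0\right\} $'' is sound and is essentially how the paper frames the problem as well (with $\frakt_{0}$ the Lie algebra of the maximal $\bbq$-anisotropic subtorus, so that the hypothesis reads $\dim\fraka+\dim\frakt_{0}\leq\dim\frakt$). The genuine gap is in the key step: the ``general linear algebra claim'' you invoke is false for a general finite group acting irreducibly, and the tools you propose to repair it cannot distinguish the Weyl group from a counterexample. Indeed, let $D$ be the dihedral group of order $8$ acting on $\bbr^{2}$ as the symmetries of a square, and let $W=D\times D$ act on $V=\bbr^{2}\otimes\bbr^{2}$ by the tensor product action; this representation is absolutely irreducible, so Burnside density holds for it. Take $V_{1}=u\otimes\bbr^{2}$ and $V_{0}=\bbr^{2}\otimes v$ with $u,v\neq0$; then $\dim V_{1}+\dim V_{0}=4=\dim V$, yet for every $w=\left(w_{1},w_{2}\right)\in W$ the image $wV_{1}=\left(w_{1}u\right)\otimes\bbr^{2}$ meets $V_{0}$ in the line spanned by $\left(w_{1}u\right)\otimes v$. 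So absolute irreducibility together with Burnside density (your suggested ingredients for the case $k\geq2$) cannot force the finite orbit of $\left[V_{1}\right]$ to leave the bad locus in the Grassmannian; any correct argument must use the reflection structure of $W\left(\Phi_{\bbr}\right)$ (or the special nature of $\frakt_{0}$ as the common kernel of the $\bbq$-characters), and that is exactly the part your sketch leaves open. The case $k=1$, which you do prove, is also the easy case.

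For comparison, the paper closes precisely this gap by an extremal descent rather than a Grassmannian argument. By Lemma \ref{lem:max ani-spl} each conjugate of $\fraka$ splits into an almost $\bbq$-anisotropic part and an almost $\bbq$-split part, and one chooses $n\in N_{G}\left(T\right)$ minimizing $\dim\left(\text{Ad}\left(n\right)\fraka\cap\frakt_{0}\right)$. If this minimum were positive, Lemma \ref{lem:exists Q-caracter} (this is where $\dim\fraka\leq\text{rank}_{\bbq}G$ enters) gives a $\bbq$-character $\chi$ vanishing on the split part, hence on all of the conjugated $\fraka$, and Proposition \ref{prop:one step} --- for every nonzero $t\in\frakt$ and nonzero $\chi\in\frakt^{*}$ there is a \emph{single reflection} $s_{\beta}$ with $s_{\beta}\left(\chi\right)\left(t\right)\neq0$, proved by a minimal-length word argument in the Weyl group --- produces one reflection whose application strictly enlarges the split part, contradicting minimality. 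This uses that $W\left(\Phi_{\bbr}\right)$ is generated by reflections and the root-system bookkeeping behind (\ref{eq: Weyl group def})--(\ref{eq:moving w}), structure that is absent from your abstract formulation. To salvage your route you would need a substitute for Proposition \ref{prop:one step} adapted to the Pl\"ucker picture; as written, the proposal does not prove the theorem.
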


We use standard notions and results of linear algebraic groups (see
\cite{Bor}).

Denote by $\frakg$, $\fraks$, $\frakt$ the Lie algebras of
$G$, $S$, $T$, respectively.
The dimension of $\frakt$ (resp. $\fraks$) is denoted by $\rank_{\bbr} G$ (resp. $\rank_\bbq G$). We denote by $\frakt^*$ (resp. $\fraks^*$) the group of $\bbr$-characters of $\frakt$, (respectively $\bbq$-characters of $\fraks$). $\fraks^*$ is exactly the restrictions to $\fraks$ of elements of $\frakt^*$. Characters are written additively.
Denote by $\Phi_{\bbq}$ and $\Phi_{\bbr}$ the set of $\bbq$-roots
in $\fraks^{*}$ and $\bbr$-roots in $\frakt^{*}$, respectively.

\begin{rem}
	Since in this section we assume that Assume $G$ is almost $\bbq$-simple, $\Phi_{\bbr}$ and $\Phi_\bbq$ are irreducible (i.e, can not be written as $\Phi=\Phi_{1}\oplus\Phi_{2}$, where $\Phi_{1}$, $\Phi_{2}$ are non-trivial root systems).
\end{rem}

Let $\kappa$ be the Killing form on $\frakg$. For $\chi\in\mathfrak{t}^{*}$
let $t_{\chi}\in\mathfrak{t}$ be determined by 
\begin{equation}
\chi\left(t\right)=\kappa\left(t_{\chi},t\right)\text{ for all }t\in\mathfrak{t}.\label{eq:t_chi def}
\end{equation}
For $\chi_{1},\chi_{2}\in\mathfrak{t}^{*}$, $\chi_{1}\neq0$, let 
\begin{equation}
\left\langle \chi_{1},\chi_{2}\right\rangle =2\frac{\kappa\left(t_{\chi_{1}},t_{\chi_{2}}\right)}{\kappa\left(t_{\chi_{1}},t_{\chi_{1}}\right)}.\label{eq:innr def}
\end{equation}

We may now view $\fraks^*$ as a subset of $\frakt^*$. Let $\frakt_0$ be the orthogonal compliment of $\fraks$ in $\frakt$ with respect to the above inner product. Then, we may define any $\chi\in\fraks^*$ on $\frakt$ by taking its restriction on $\frakt_0$ to be zero. 

Denote by $W\left(\Phi_{\bbr}\right)$ the Weyl group associated with
$\Phi_{\bbr}$, i.e. the group generated by the reflections $s_{\beta}$
, $\beta\in\Phi_{\bbr}$, defined by 
\begin{equation}
s_{\beta}\left(\chi\right)=\chi-\left\langle \chi,\beta\right\rangle\beta\label{eq: Weyl group def}
\end{equation}
for any characters $\chi$. For any $w\in W\left(\Phi_{\bbr}\right)$
there exists $n_{w}\in N_{G}\left(T\right)$ such that for all $\chi\in\mathfrak{t}^{*}$
\begin{equation}
\text{Ad}\left(n_{w}\right)t_{\chi}=t_{w\left(\chi\right)}\label{eq:weyl act}
\end{equation}
(see \cite[§VI.5]{k}). It follows from (\ref{eq:t_chi def}), (\ref{eq:weyl act}),
and the invariance of the Killing form under automorphisms of $\frakg$
that for any $\chi\in\frakt^{*}$, $t\in\frakt$, and $w\in W\left(\Phi_{\bbr}\right)$
\begin{equation}
w\left(\chi\right)\left(t\right)=\chi\left(\text{Ad}\left(n_{w}^{-1}\right)t\right)\label{eq:moving w}
\end{equation}

\begin{prop}
\label{prop:one step}For any non-zero $t\in\frakt$ and non-zero $\chi\in\frakt^{*}$
there exists $\beta\in\Phi_{\bbr}\cup\left\{ 0\right\} $ such that
$s_{\beta}\left(\chi\right)\left(t\right)\neq0$.
\end{prop}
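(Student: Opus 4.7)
If $\chi(t)\neq 0$, take $\beta=0$ (interpreting $s_0$ as the identity). Assume henceforth $\chi(t)=0$, so that (\ref{eq: Weyl group def}) gives $s_\beta(\chi)(t) = -\langle \chi,\beta\rangle \beta(t)$. The symmetry of $\kappa$ combined with (\ref{eq:t_chi def}) and (\ref{eq:innr def}) shows that $\langle\chi,\beta\rangle\neq 0$ if and only if $\beta(t_\chi)\neq 0$. Set $t_1 = t$ and $t_2 = t_\chi$; both are nonzero in $\frakt$ since $\chi\neq 0$ and $\kappa$ is nondegenerate on $\frakt$. The problem then reduces to producing $\beta\in\Phi_{\bbr}$ with $\beta(t_1)\neq 0$ and $\beta(t_2)\neq 0$.

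Assume for contradiction no such $\beta$ exists, and write $\Phi_i=\{\beta\in\Phi_{\bbr}:\beta(t_i)=0\}$, so $\Phi_{\bbr}=\Phi_1\cup\Phi_2$. The first key step is to show that any $\alpha\in\Phi_1\setminus\Phi_2$ and $\beta\in\Phi_2\setminus\Phi_1$ satisfy $\kappa(t_\alpha,t_\beta)=0$. Note that $\alpha\neq\pm\beta$ by definition of the $\Phi_i$. If instead $\kappa(t_\alpha,t_\beta)\neq 0$, the standard root-system dichotomy furnishes a root among $\alpha\pm\beta$; but a direct computation such as $(\alpha+\beta)(t_1)=\beta(t_1)\neq 0$ and $(\alpha+\beta)(t_2)=\alpha(t_2)\neq 0$ (and analogously for $\alpha-\beta$) shows that this root lies in neither $\Phi_1$ nor $\Phi_2$, contradicting $\Phi_{\bbr}=\Phi_1\cup\Phi_2$.

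Now fix a system of simple roots $\Delta$ of $\Phi_{\bbr}$. As $\Delta$ spans $\frakt^*$ and each $\Phi_i$ lies in a hyperplane, both $\Delta_1:=\Delta\cap(\Phi_1\setminus\Phi_2)$ and $\Delta_2:=\Delta\cap(\Phi_2\setminus\Phi_1)$ are nonempty. By the previous step no edge of the Dynkin diagram of $\Phi_{\bbr}$ connects $\Delta_1$ to $\Delta_2$, and since $\Phi_{\bbr}$ is irreducible its Dynkin diagram is connected, so there is a path $\gamma_0,\gamma_1,\ldots,\gamma_n$ with $\gamma_0\in\Delta_1$, $\gamma_n\in\Delta_2$, and $\gamma_1,\ldots,\gamma_{n-1}\in\Delta\cap\Phi_1\cap\Phi_2$. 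A routine induction on $k$ shows that $\mu_k:=\gamma_0+\cdots+\gamma_k$ is a root in $\Phi_{\bbr}$: the pairing $(\mu_{k-1},\gamma_k)$ is a sum of non-positive simple-root pairings containing the strictly negative term $(\gamma_{k-1},\gamma_k)$, so it is negative, whence $\mu_k=\mu_{k-1}+\gamma_k$ is a root. Finally, $\mu:=\mu_n$ satisfies $\mu(t_1)=\gamma_n(t_1)\neq 0$ and $\mu(t_2)=\gamma_0(t_2)\neq 0$, so $\mu\in\Phi_{\bbr}\setminus(\Phi_1\cup\Phi_2)$, a contradiction. The one delicate ingredient is the inductive claim that the partial sums along a Dynkin path remain roots; everything else is bookkeeping with the partition of $\Phi_{\bbr}$ imposed by the two hyperplanes.
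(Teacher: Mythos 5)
Your proof is correct, but it takes a genuinely different route from the paper's. After the common reduction to the case $\chi\left(t\right)=0$, where (\ref{eq: Weyl group def}) gives $s_{\beta}\left(\chi\right)\left(t\right)=-\left\langle \chi,\beta\right\rangle \beta\left(t\right)$, you note that $\left\langle \chi,\beta\right\rangle \neq0$ is equivalent to $\beta\left(t_{\chi}\right)\neq0$, so the proposition becomes the purely combinatorial claim that for the two nonzero elements $t,t_{\chi}\in\frakt$ some root is nonvanishing on both; you prove this by splitting $\Phi_{\bbr}$ into the two kernel sets, using the dichotomy that non-proportional roots with nonzero pairing have $\alpha+\beta$ or $\alpha-\beta$ a root to show the two ``pure'' parts are orthogonal, and then using connectedness of the Dynkin diagram (irreducibility of $\Phi_{\bbr}$) to build, as a sum along a path of simple roots, a root nonzero on both elements. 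The paper instead invokes irreducibility of the $W\left(\Phi_{\bbr}\right)$-action on $\frakt^{*}$ to get some $w$ with $w\left(\chi\right)\left(t\right)\neq0$, takes a minimal-length word for $w$, and shows by a minimality argument that the single reflection in $\beta=s_{1}\cdots s_{k-1}\left(\lambda_{k}\right)$ already works. Both arguments rest on the section's standing irreducibility assumption. Yours is more elementary and self-contained (no Weyl-group representatives $n_{w}$ or the relation (\ref{eq:moving w})), and it characterizes exactly which $\beta$ work, namely those with $\beta\left(t\right)\neq0$ and $\beta\left(t_{\chi}\right)\neq0$; the paper's route is shorter given the cited lemma of Humphreys, and its Killing-form computation is reused later in the proof of Theorem \ref{thm:maximal almost split}. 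Two small points to tighten: take the connecting path to be a shortest path between $\Delta_{1}$ and $\Delta_{2}$, so that its interior vertices automatically lie in $\Delta\cap\Phi_{1}\cap\Phi_{2}$ and the simple roots along it are pairwise distinct (needed for your sign argument in the induction); and since $\Phi_{\bbr}$ is a restricted root system it may be non-reduced, but the facts you use (the sum/difference dichotomy, non-positive pairings of distinct simple roots, connectedness of the diagram) remain valid there, and proportional roots can never lie in opposite pure parts, so your first step is unaffected.
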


\begin{proof}
If $\chi\left(t\right)\neq0$, then we may take $\beta=0$. 

Assume $\chi\left(t\right)=0$. 
The span of the $W\left(\Phi_{\bbr}\right)$-orbit of a non-zero character
$\chi$ is a nonzero $W\left(\Phi_{\bbr}\right)$-invariant subspace
of $\frakt^{*}$. According to Lemma B in \cite[§10.4]{H}, $W\left(\Phi_{\bbr}\right)$
acts irreducibly on $\frakt^{*}$. Therefore, the $W\left(\Phi_{\bbr}\right)$-orbit
of $\chi$ spans $\frakt^{*}$. Since $t$ is non-zero, there exists
$w\in W\left(\Phi_{\bbr}\right)$ such that $w\left(\chi\right)\left(t\right)\neq0$.

The element $w\in W\left(\Phi_{\bbr}\right)$ can be written as a product $w=s_{\lambda_{1}}\cdots s_{\lambda_{k}}$ with $\lambda_{1},\dots,\lambda_{k}\in\Delta_{\bbr}$.
Assume that $w$ is chosen so that $k$ is minimal. Then, by (\ref{eq: Weyl group def}) \[
\left\langle \chi,\lambda_{k}\right\rangle \neq0.\]
If $\lambda_k(t)\neq0$, then by (\ref{eq: Weyl group def}) we are done. But it may not be the case. We will see that 
\begin{equation}
\forall1\leq i<k,\qquad\left\langle \chi,\lambda_{i}\right\rangle =0,\label{eq:all i not k}
\end{equation}
and use it to choose $\beta$. 

Assume otherwise; for some $1\leq i<k$ we have 
\begin{equation}
\left\langle \chi,\lambda_{i}\right\rangle =c\neq0.\label{eq:c def}
\end{equation}
For $j=1,\dots,k$ denote $s_{j}=s_{\lambda_{j}}$. Recall that for any $s\in W\left(\Phi_{\bbr}\right)$ we denote by $n_{s}$ the element in $N_{G}\left(T\right)$ which represents $s$. If the adjoint action of $n_{s_{i}}^{-1}$ on $\text{Ad}\left(n_{s_{i-1}}^{-1}\cdots n_{s_{1}}^{-1}\right)t$ is trivial, then by (\ref{eq:moving w}) 
\begin{align*}
s_{1}\cdots s_{i-1}s_{i+1}\cdots s_{k}\left(\chi\right)\left(t\right) & =s_{i+1}\cdots s_{k}\left(\chi\right)\left(\text{Ad}\left(n_{s_{i-1}}^{-1}\cdots n_{s_{1}}^{-1}\right)t\right)\\
 & =s_{i+1}\cdots s_{k}\left(\chi\right)\left(\text{Ad}\left(n_{s_{i}}^{-1}n_{s_{i-1}}^{-1}\cdots n_{s_{1}}^{-1}\right)t\right)\\
 & =w\left(\chi\right)\left(t\right)\neq0,
\end{align*}
a contradiction to the minimality of $k$. 

Let $w^{\prime}=s_1\cdots s_{i-1}$ and $t^{\prime}=\text{Ad}\left(n_{w^{\prime}}^{-1}\right)t$.
Note that if $s$ is a reflection, it is its own inverse and so $n_{s}^{-1}$
is another represent of $s$. Since $\kappa\left(t^{\prime},\cdot\right)$
defines a character on $\frakt$, equations (\ref{eq:t_chi def}),
(\ref{eq:innr def}), (\ref{eq: Weyl group def}) and (\ref{eq:weyl act})
yield
\[
\text{Ad}\left(n_{s_{i}}^{-1}\right)t^{\prime}=t^{\prime}-\frac{2\kappa\left(t^{\prime},t_{\lambda_{i}}\right)}{\kappa\left(t_{\lambda_{i}},t_{\lambda_{i}}\right)}t_{\lambda_{i}}.
\]
Hence, the non-triviality of the adjoint action of $n_{s_{\lambda_{i}}}^{-1}$
on $t^{\prime}$ implies
\begin{equation}
2\lambda_{i}\left(t^{\prime}\right)=2\kappa\left(t^{\prime},t_{\lambda_{i}}\right)\neq0.\label{eq:k not 0}
\end{equation}

Equation (\ref{eq:moving w}) and the minimality of $k$ imply that $\chi\left(t^{\prime}\right)=w'\left(\chi\right)\left(t\right)=0$.
Therefore, (\ref{eq: Weyl group def}), (\ref{eq:moving w}), (\ref{eq:c def}),
and (\ref{eq:k not 0}) imply 
\begin{align*}
s_{1}\cdots s_{i}\left(\chi\right)\left(t\right) & =s_{i}\left(\chi\right)\left(t^{\prime}\right)\\
 & =\left(\chi+c\lambda_{i}\right)\left(t^{\prime}\right)\\
 & =\chi\left(t^{\prime}\right)+c\lambda_{i}\left(t^{\prime}\right)\neq0.
\end{align*}
A contradiction to the minimality of $k$, proving (\ref{eq:all i not k}). 

Let $\beta=s_{1}\cdots s_{k-1}\left(\lambda_{k}\right)$. Then, 
\begin{equation}
s_{\beta}=s_{1}\cdots s_{k-1}s_{k}s_{k-1}\cdots s_{1}\label{eq:s_beta}
\end{equation}
(see \cite[\S II.6]{k}). It follows from (\ref{eq: Weyl group def}), (\ref{eq:all i not k}), and
(\ref{eq:s_beta}) that 
\begin{align*}
s_{\beta}\left(\chi\right)\left(t\right) & =s_{1}\cdots s_{k-1}s_{k}\left(\chi\right)\left(t\right)=w\left(\chi\right)\left(t\right)\neq0.
\end{align*}
\end{proof}

\begin{lem}
\label{lem:max ani-spl}Let $A$ be a subgroup of $T$. Then $A$
contains a maximal almost $\bbq$-anisotropic subgroup $A_{\text{ani}}$
and a maximal almost $\bbq$-split subgroup $A_{\text{spl}}$ such
that $A=A_{\text{ani}}A_{\text{spl}}$ and $A_{\text{ani}}\cap A_{\text{spl}}$
is finite.
\end{lem}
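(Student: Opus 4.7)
The plan is to exploit the decomposition $T=S\cdot T_{a}$, where $S$ is the maximal $\bbq$-split subtorus of $T$ fixed earlier and $T_{a}$ is the maximal $\bbq$-anisotropic subtorus of $T$; these satisfy $S\cap T_{a}$ finite and yield $\frakt=\fraks\oplus\frakt_{a}$ at the Lie algebra level. Every $\bbq$-character of $T$ vanishes on $T_{a}$, and the $\bbq$-characters of $T$ descend to $T/T_{a}$; since $T/T_{a}$ is $\bbq$-isogenous to the $\bbq$-split torus $S$, these descended characters separate points. Consequently the intersection of the kernels of all $\bbq$-characters of $T$ is exactly $T_{a}$, so a subgroup of $T$ is almost $\bbq$-anisotropic if and only if it lies inside $T_{a}$.

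I would then set $A_{\text{ani}}:=A\cap T_{a}$; by the previous paragraph this is automatically the unique maximal almost $\bbq$-anisotropic subgroup of $A$. To construct $A_{\text{spl}}$, I pass to the Lie algebra $\fraka$ of the identity component $A^{\circ}$, note that $\fraka\cap\frakt_{a}$ is the Lie algebra of $A_{\text{ani}}^{\circ}$, and pick an arbitrary linear complement $\fraka_{\text{spl}}$ to $\fraka\cap\frakt_{a}$ inside $\fraka$. The corresponding connected Lie subgroup $A_{\text{spl}}^{\circ}\subset T$ is closed because $T(\bbr)^{\circ}$ is a vector group, inside which every connected Lie subgroup is closed. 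When $A$ is disconnected I would enlarge $A_{\text{spl}}^{\circ}$ by adjoining coset representatives of $A/(A^{\circ}A_{\text{ani}})$ to form $A_{\text{spl}}$.

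Three properties then need checking. First, $A=A_{\text{ani}}\cdot A_{\text{spl}}$ follows on identity components from the direct-sum decomposition $\fraka=(\fraka\cap\frakt_{a})\oplus\fraka_{\text{spl}}$, and is extended to all of $A$ by the chosen coset representatives. Second, $A_{\text{ani}}\cap A_{\text{spl}}$ is finite: on identity components it is trivial since the two Lie algebras intersect trivially inside the vector group $T(\bbr)^{\circ}$, and the disconnected contribution is absorbed into the finite discrete piece coming from the component group. Third, $A_{\text{spl}}$ is almost $\bbq$-split and maximal: any positive-dimensional almost $\bbq$-anisotropic subgroup of $A_{\text{spl}}$ would have Lie algebra inside $\fraka_{\text{spl}}\cap\frakt_{a}=0$, forcing triviality, while maximality follows from the dimension identity $\dim A_{\text{spl}}=\dim A-\dim A_{\text{ani}}$, the largest dimension available to an almost $\bbq$-split subgroup of $A$. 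The main obstacle I anticipate is bookkeeping in the disconnected case and confirming that ``almost $\bbq$-split'' is intended to tolerate the finite intersection $A_{\text{ani}}\cap A_{\text{spl}}$, consistent with the classical convention for $\bbq$-tori where the analogous notion refers only to positive-dimensional subtori.
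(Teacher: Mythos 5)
Your skeleton is essentially the paper's: the paper also takes $A_{\text{ani}}=A\cap T_{\text{ani}}$, where $T_{\text{ani}}$ is the unique maximal $\bbq$-anisotropic subtorus of $T$ (Springer, Prop.\ 13.2.4), and then produces a complementary subgroup $A_{\text{spl}}$ inside $A$; the only real difference is that the paper gets the complement by citing Springer, Prop.\ 13.2.3, while you build it by hand from a linear complement of $\fraka\cap\frakt_{\text{ani}}$ in $\fraka$. Your identification of ``almost $\bbq$-anisotropic'' with ``contained in $T_{\text{ani}}$'' is correct and is what makes $A\cap T_{\text{ani}}$ maximal, and your closing remark about the tension between the finite intersection $A_{\text{ani}}\cap A_{\text{spl}}$ and the letter of the definition of almost $\bbq$-split is a fair reading of the paper's conventions.

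Two steps, as written, do not go through. First, in the disconnected case, adjoining arbitrary coset representatives of $A/(A^{\circ}A_{\text{ani}})$ need not produce a subgroup with the stated properties: a product of two representatives differs from a representative of the product coset by an element of $A^{\circ}A_{\text{ani}}$, so the group generated by $A_{\text{spl}}^{\circ}$ and badly chosen representatives can meet $A_{\text{ani}}$ in an infinite (even dense) subgroup, ruining both the finiteness of $A_{\text{ani}}\cap A_{\text{spl}}$ and almost $\bbq$-splitness; moreover $A/(A^{\circ}A_{\text{ani}})$ need not be finite, since closed subgroups of $T(\bbr)$ can have $\bbz$-factors. The repair is to note that for closed $A$ this quotient is a finitely generated abelian group and to choose the lifts so that they split at least its free part, so that only torsion can contribute to $A_{\text{ani}}\cap A_{\text{spl}}$. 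Second, your maximality argument is only a dimension count, which does not exclude an almost $\bbq$-split subgroup properly containing $A_{\text{spl}}$ of the same dimension (differing by a discrete piece). Maximality with respect to inclusion follows instead directly from the product decomposition, which is how the paper argues: if $A_{\text{spl}}\subsetneq B\subset A$, pick $b\in B\setminus A_{\text{spl}}$ and write $b=xy$ with $x\in A_{\text{ani}}$, $y\in A_{\text{spl}}$; then $x=by^{-1}\in B$ is nontrivial (otherwise $b=y\in A_{\text{spl}}$), so $B$ contains a nontrivial almost $\bbq$-anisotropic subgroup and is not almost $\bbq$-split.
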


\begin{proof}
According to \cite[Prop. 13.2.4]{s} there exists a unique maximal
$\bbq$-anisotropic subtorus $T_{\text{ani}}$ of $T$. Then, by definition,
the set $A_{\text{ani}}=A\cap T_{\text{ani}}$ is a maximal $\bbq$-anisotropic
subtorus of $A$. By \cite[Prop. 13.2.3]{s} there exists a subgroup
$A_{\text{spl}}$ of $A$ such that $A=A_{\text{ani}}A_{\text{spl}}$
and $A_{\text{ani}}\cap A_{\text{spl}}$ is finite. Note that $A_{\text{spl}}$ is not necessarily a subset of $S$ (and in that case, not defined over $\bbq$). Then, any subgroup
of $A$ properly containing $A_{\text{spl}}$ contains a non-trivial
almost $\bbq$-anisotropi subgroup. Hence, $A_{\text{spl}}$ is a
maximal almost $\bbq$-split subgroup of $A$.
\end{proof}

\begin{lem}\label{lem:exists Q-caracter}
	Let $V$ be an inner product space and $U,W$ be subspaces of $V$. If $\dim W<\dim U$, then there exists a non-trivial $\varphi\in V^*$ which is trivial on $W$ and on the orthogonal complement of $U$ in $V$. 
\end{lem}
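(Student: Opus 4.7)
The plan is to use the inner product on $V$ to identify $V^*$ with $V$ itself, and then translate the two vanishing conditions into a pair of subspace-membership conditions that can be handled by a straightforward dimension count. Throughout, the underlying vector spaces are finite-dimensional, which is the case in the intended application (where $V=\frakt^{*}$).

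More precisely, the Riesz-type identification sends $v\in V$ to the functional $\varphi_{v}=\left\langle v,\cdot\right\rangle \in V^{*}$, and every element of $V^{*}$ arises this way. Under this identification, $\varphi_{v}$ is trivial on $W$ if and only if $v\in W^{\perp}$, while $\varphi_{v}$ is trivial on the orthogonal complement $U^{\perp}$ if and only if $v\in\left(U^{\perp}\right)^{\perp}=U$. Hence the lemma reduces to producing a non-zero vector in $U\cap W^{\perp}$.

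For this I would invoke the standard dimension inequality
\[
\dim\left(U\cap W^{\perp}\right)\;\geq\;\dim U+\dim W^{\perp}-\dim\left(U+W^{\perp}\right)\;\geq\;\dim U+\left(\dim V-\dim W\right)-\dim V\;=\;\dim U-\dim W.
\]
Since the hypothesis $\dim W<\dim U$ makes the right-hand side strictly positive, $U\cap W^{\perp}$ contains a non-zero vector $v$, and $\varphi=\varphi_{v}$ is the required functional.

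There is no real obstacle here; the only point requiring a line of justification is the identity $\left(U^{\perp}\right)^{\perp}=U$, which holds because $U$ is finite-dimensional (hence closed). Everything else is a one-shot application of the subspace dimension formula.
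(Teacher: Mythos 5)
Your proof is correct. It is, in essence, the dual formulation of the paper's argument: the paper produces a non-trivial functional $\varphi'\in U^*$ vanishing on (the relevant image of) $W$ by comparing $\dim W<\dim U$, and then pulls it back to $V$ through the orthogonal projection $p:V\rightarrow U$, while you use the inner product to identify $V^*$ with $V$ and produce the corresponding vector $v\in U\cap W^{\perp}$ directly via the inequality $\dim\left(U\cap W^{\perp}\right)\geq\dim U-\dim W>0$. Under the Riesz correspondence the two constructions give literally the same functional, since for $u\in U$ one has $\left\langle u,w\right\rangle=\left\langle u,p(w)\right\rangle$ for all $w\in W$. If anything, your phrasing is slightly tighter: the paper chooses $\varphi'$ trivial on $W\cap U$, whereas for $\varphi=\varphi'\circ p$ to kill $W$ one really wants $\varphi'$ trivial on $p(W)$ (which works for the same dimension reason, as $\dim p(W)\leq\dim W<\dim U$); your vector-side argument avoids this point altogether. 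Both arguments rely on finite-dimensionality, which you correctly flag through the identity $\left(U^{\perp}\right)^{\perp}=U$ and which holds in the intended application $V=\frakt^*$.
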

\begin{proof}
	It follows from \[
	\dim W^*=\dim W<\dim U=\dim U^*\]
	that there exists a non-trivial $\varphi'\in U^*$ which is trivial on $W\cap U$. Denote by $p:V\rightarrow U$ the orthogonal projection of $V$ onto $U$. Define $\varphi\in V^*$ by \[
	\varphi(u)=\varphi'(p(u)).
	\]
	Then, $\varphi$ satisfies the claim. 
\end{proof}

\begin{proof}[Proof of Theorem \ref{thm:maximal almost split}]

According to Lemma \ref{lem:max ani-spl}, for any subgroup $T^{\prime}$ of $T$ we can denote by $T_{\text{ani}}^{\prime}$ its unique maximal almost $\bbq$-anisotropi subgroup and by $T_{\text{spl}}^{\prime}$ its unique maximal almost $\bbq$-split subgroup. If $\frakt^{\prime}$
is the Lie algebra of $T^{\prime}$, denote by $\frakt_{\text{ani}}^{\prime}$
and $\frakt_{\text{spl}}^{\prime}$ the Lie algebras of $T_{\text{ani}}^{\prime}$
and $T_{\text{spl}}^{\prime}$, respectively.

Let $\fraka$ be the Lie algebra of $A$. Without loss of generality,
assume that the dimension of $\fraka_{\text{ani}}$ is less than or
equal to the dimension of $\left(\text{Ad}\left(n\right)\fraka\right)_{\text{ani}}$
for any $n\in N_{G}\left(T\right)$. 

Assume by contradiction that $\fraka_{\text{ani}}$ has positive dimension. Then, $\dim\fraka_{\text{spl}}<\text{rank}_{\bbq}\left(G\right)$.
By Lemma \ref{lem:exists Q-caracter} there exists a $\bbq$-character $\chi$ defined on $\frakt$
which is trivial on $\fraka_{\text{spl}}$ (therefore trivial on $\fraka$).
Let $a$ be a non-zero element in $\fraka_{\text{ani}}$. By Proposition
\ref{prop:one step} there exists $\beta\in\Phi_{\bbr}$ such that
$s_{\beta}\left(\chi\right)\left(a\right)\neq0$. Note that since
$a\in\fraka_{\text{ani}}$, we have $\chi\left(a\right)=0$. Thus,
$s_{\beta}\left(\chi\right)\neq\chi$, which by (\ref{eq: Weyl group def})
implies 
\begin{equation}
\left\langle \chi,\beta\right\rangle \neq0.\label{eq:chi,beta not 0}
\end{equation}
We claim that 
\begin{equation}
\text{Ad}\left(n_{s_{\beta}}\right)\fraka_{\text{spl}},\left\{ \text{Ad}\left(n_{s_{\beta}}\right)a\right\} \subset\left(\text{Ad}\left(n_{s_{\beta}}\right)\fraka\right)_{\text{spl}}.\label{eq:dim cont}
\end{equation}

By (\ref{eq:moving w}) and the choice of $\beta$, $\chi\left(\text{Ad}\left(n_{s_{\beta}}\right)a\right)=s_{\beta}\left(\chi\right)\left(a\right)$
is non-zero. Hence 
\[
\text{Ad}\left(n_{s_{\beta}}\right)a\in\left(\text{Ad}\left(n_{s_{\beta}}\right)\fraka\right)_{\text{spl}}.
\]

Let $t\in\fraka_{\text{spl}}$. Then there exists a $\bbq$-character
$\lambda$ defined on $\frakt$ which is non-trivial on $t$. If $t$
is invariant under the adjiont action of $n_{s_{\beta}}$, then 
\[
\lambda\left(\text{Ad}\left(n_{s_{\beta}}\right)t\right)=\lambda\left(t\right)\neq0.
\]
If $t$ is not invariant under the adjiont action of $n_{s_{\beta}}$, then as in the proof of Proposition \ref{prop:one step} one can get that $\beta\left(t\right)\neq0$. Thus, by (\ref{eq:moving w}) and (\ref{eq:chi,beta not 0}) we arrive at 
\[
\chi\left(\text{Ad}\left(n_{s_{\beta}}\right)t\right)=s_{\beta}\left(\chi\right)\left(t\right)=\chi\left(t\right)-\left\langle \chi,\beta\right\rangle \beta\left(t\right)\neq0.
\]
We may conclude that for any $t\in\fraka_{\text{spl}}$, 
\[
\text{Ad}\left(n_{s_{\beta}}\right)t\in\left(\text{Ad}\left(n_{\beta}\right)\fraka\right)_{\text{spl}}.
\]

This proves (\ref{eq:dim cont}), a contradiction to the maximality
of the dimension of $\fraka_{\text{spl}}$. 
\end{proof}

\section{\label{sec:Highest Weight Representations}Highest Weight Representations}

The goal of this section is to prove Theorem \ref{thm:2}. We start with some notation and preliminary results about $\bbr$-representations, and specifically $\bbr$-representations with a highest weight. 

We preserve the notation of \S \ref{sec:maximal almost}. 

Let $\varrho:G\rightarrow\mbox{GL}\left(V\right)$ be a $\bbq$-representation of $G$. For $\lambda\in\fraks^{*}$ denote
\begin{equation*}\label{eq: defn Q-weight}
V_{\varrho,\lambda}=\left\{v\in V\::\:\forall s\in\fraks\quad\varrho\left(\exp\left(s\right)\right)v=e^{\lambda\left(s\right)}v\right\}.
\end{equation*}
If $V_{\varrho,\lambda}\neq\left\{ 0\right\}$, then $\lambda$ is called a \textbf{$\bbq$-weight for $\varrho$}. Denote by $\Phi_{\varrho}$
the set of $\bbq$-weights for $\varrho$. For any $\lambda\in\Phi_{\varrho}$,
$V_{\varrho,\lambda}$ is called \textbf{the $\bbq$-weight vector space for $\lambda$}, and members of $V_{\varrho,\lambda}$ are called \textbf{$\bbq$-weight vectors for $\lambda$}.

As in the previous section, $\Phi_{\bbq}$ is the set of $\bbq$-roots. For $\beta\in\Phi_{\bbq}\cup\left\{0\right\}$ denote by $\frakg_{\beta}$ the $\bbq$-root space for $\beta$, i.e., the $\bbq$-weight vector space for $\beta$ with respect to the adjoint representation. 

For any $\bbq$-representation $\varrho$, the $\bbq$-weights for $\varrho$ are related to the $\bbq$-roots by 
\begin{equation}
\varrho\left(\frakg_{\beta}\right)V_{\varrho,\lambda}\subset V_{\varrho,\beta+\lambda},\quad\beta\in\Phi_\bbq \cup\left\{0\right\},\;\lambda\in\Phi_{\varrho}.\label{eq: lambda acts on chi}
\end{equation}


For any $\chi\in\frakt^*$ and $\frakt'\subset \frakt$ denote by $\chi\mid_{\frakt'}$ the restriction of $\chi$ to $\frakt'$. 

Recall that $\frakt_0$ is the orthogonal compliment with respect to the inner product defined in (\ref{eq:innr def}). Then, by definition any $\bbq$-character defined on $\frakt$ is trivial on $\frakt_0$.

\begin{lem}\label{lem:chi tilde}
	Let $\varrho:G\rightarrow\mbox{GL}\left(V\right)$ be a $\bbq$-representation with a $\bbq$-weight $\chi$ such that $\dim\left(V_{\varrho,\chi}\right)=1$. Then, for any $t\in\frakt$, $v\in V_{\varrho,\chi}$\[
	\varrho\left(\exp\left(t\right)\right)v=e^{{\chi}\left(t\right)}v.\]
\end{lem}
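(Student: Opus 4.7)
The plan is to exploit the decomposition $\frakt=\frakt_0\oplus\fraks$ from \eqref{eq:t_0 def} and reduce the claim to showing that $\frakt_0$ acts trivially on the one-dimensional weight space $V_{\varrho,\chi}$. Given $t\in\frakt$, I would write $t=t_0+s$ with $t_0\in\frakt_0$ and $s\in\fraks$. Since $\frakt$ is abelian, $\exp(t)=\exp(t_0)\exp(s)$, and by the definition of $V_{\varrho,\chi}$, $\varrho(\exp(s))v=e^{\chi(s)}v=e^{\tilde{\chi}(s)}v$. Hence the lemma reduces to showing $\varrho(\exp(t_0))v=v$, since $\tilde{\chi}(t_0)=0$ by definition.

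The first step is to note that $V_{\varrho,\chi}$ is $T$-invariant. Since $\frakt$ is abelian, $\varrho(\exp(\frakt))$ commutes with $\varrho(\exp(\fraks))$; thus for $v\in V_{\varrho,\chi}$, $s\in\fraks$, and $t'\in\frakt$ we have
\[
\varrho(\exp(s))\varrho(\exp(t'))v=\varrho(\exp(t'))\varrho(\exp(s))v=e^{\chi(s)}\varrho(\exp(t'))v,
\]
so $\varrho(\exp(t'))v\in V_{\varrho,\chi}$. Because $V_{\varrho,\chi}$ is one-dimensional, the $T$-action on it is via a character $\psi:T\to\mathbb{G}_m$ whose differential satisfies $d\psi|_{\fraks}=\chi$.

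The key step is to upgrade $\psi$ to a $\bbq$-character. Since $\varrho$ is a $\bbq$-representation and $S$ is $\bbq$-split, the weight space decomposition $V=\bigoplus_{\lambda\in\Phi_{\varrho}}V_{\varrho,\lambda}$ is defined over $\bbq$. Consequently, $V_{\varrho,\chi}$ is a one-dimensional $\bbq$-subspace preserved by the $\bbq$-group $T$, so $\psi$ is a $\bbq$-character of $T$. As recalled in the paper right after \eqref{eq:t_0 def}, any $\bbq$-character of $T$ is trivial on $T_0$, so $\psi|_{T_0}$ is trivial and $d\psi(t_0)=0$ for every $t_0\in\frakt_0$. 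Combining this with $d\psi|_{\fraks}=\chi$ gives $d\psi=\tilde{\chi}$, and hence $\varrho(\exp(t))v=e^{\tilde{\chi}(t)}v$ for every $t\in\frakt$.

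The only point that requires care is the $\bbq$-rationality of the character $\psi$; everything else reduces to the abelianness of $\frakt$ and to the defining property that the maximal $\bbq$-anisotropic torus kills every $\bbq$-character. If one is squeamish about appealing to $\bbq$-structures on $V_{\varrho,\chi}$ directly, the same conclusion can be reached by applying an element of $\mathrm{Gal}(\overline{\bbq}/\bbq)$ to $\psi$ and noting that any Galois conjugate of $\psi$ must also restrict to $\chi$ on $\fraks$ and act on the same one-dimensional space, forcing $\psi$ to be Galois-invariant.
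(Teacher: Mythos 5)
Your proposal is correct and follows essentially the same route as the paper: show $T$ preserves the one-dimensional $\bbq$-defined weight space (the paper cites \eqref{eq: lambda acts on chi} with $\frakt\subset\frakg_{0}$ where you use commutativity of $\frakt$, which amounts to the same thing), so $T$ acts through a character that restricts to $\chi$ on $\fraks$, is rational because the line is defined over $\bbq$, and hence vanishes on $\frakt_{0}$, forcing it to equal $\tilde{\chi}$.
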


Note that the set $V_{\varrho,\lambda}$ is defined with respect to $\fraks$ (and not $\frakt$). 

\begin{proof}
	Since $V_{\varrho,\chi}$ is one dimensional and $\frakt\subset\frakg_0$, Equation (\ref{eq: lambda acts on chi}) implies that the torus $T$ acts multiplicatively on $V_{\varrho,\chi}$. That is, there is a charcter $\lambda\in\frakt^*$ such that for any $t\in\frakt$, $v\in V_{\varrho,\chi}$\[
	\varrho\left(\exp\left(t\right)\right)v=e^{\lambda\left(t\right)}v.\]	
	By the assumption, $\lambda\mid_{\fraks}=\chi$. 
	Since $V_{\varrho,\chi}$ is a $\bbq$-weight space, it is defined over $\bbq$. Hence, $\lambda$ is a $\bbq$-character and and we may deduce $\lambda\mid_{\frakt_0}=0$, implying $\lambda={\chi}$	
\end{proof}

Fix a $\bbq$-simple system 
\[
\Delta_{\bbq}=\left\{ \alpha_{1},\dots,\alpha_{r}\right\} .
\]
Let $\chi_{1},\dots,\chi_{r}\in\fraks^{*}$ be the\textbf{ $\bbq$-fundamental
	weights}, i.e. for any $1\leq i,j\leq r$ 
\begin{equation}
\left\langle \chi_{i},\alpha_{j}\right\rangle =\delta_{i,j}\label{eq:fundamental weights}
\end{equation}
(Kronecker delta). 

For any $c_{1},\dots,c_{r}\in\bbz$ the character \[
\chi=\sum_{i=1}^{r}c_{i}\chi_{i}
\]
is called a \textbf{$\bbq$-abstract weight} and if  $c_{1},\dots,c_{r}$ are non-negative, then $\chi$ is called \textbf{dominant}.

A $\bbq$-weight $\chi$ for $\varrho$ is called a \textbf{$\bbq$-highest  weight for $\varrho$} if any $\lambda\in\Phi_{\varrho}$ satisfies $\lambda\leq\chi$.

\begin{defn}
	\label{def:strongly rational}A finite-dimensional $\bbq$-representation
	$\varrho:G\rightarrow\mbox{GL}\left(V\right)$ is called \textbf{strongly rational over $\bbq$} if there is a $\bbq$-highest weight for $\varrho$ and the $\bbq$-weight vector space for the $\bbq$-highest weight is of dimension one.
\end{defn}

\begin{lem}
	\label{lem: exist rho 1}\cite[\S 12]{key-27} Let $\chi$ be a $\bbq$-abstract
	weight. 
	\begin{enumerate}
		\item[$\left(i\right)$]  $\chi$ is conjugated under $W\left(\Phi_{\bbq}\right)$ to precisely one
		$\bbq$-abstract dominant weight. 
		\item[$\left(ii\right)$] If $\chi$ is a $\bbq$-abstract dominant weight, then there exist a positive integer $m$ and a strongly rational over $\bbq$-representation $\varrho:G\rightarrow\mbox{GL}\left(V\right)$ with $\bbq$-highest weight $m\chi$. 
	\end{enumerate}
\end{lem}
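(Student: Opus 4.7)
The plan is to establish $(i)$ via the theory of Weyl chambers, and $(ii)$ via the theorem of the highest weight together with a descent argument to $\bbq$.

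For $(i)$, I would work in $\fraks^*\otimes\bbr$ and use the simple system $\Delta_\bbq$ to single out the closed fundamental chamber $\mathcal{C}=\{\xi\in\fraks^*\otimes\bbr:\langle\xi,\alpha_i\rangle\geq 0\text{ for all }i=1,\dots,r\}$. The $\bbq$-abstract dominant weights are, by the definition in the text, exactly the $\bbq$-abstract weights lying in $\mathcal{C}$. The key input is the classical fact (Humphreys \S 10.3, Theorem) that $\mathcal{C}$ is a strict fundamental domain for the action of $W(\Phi_\bbq)$ on $\fraks^*\otimes\bbr$: every $W(\Phi_\bbq)$-orbit meets $\mathcal{C}$ in exactly one point. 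Applied to the orbit of a $\bbq$-abstract weight, this gives both existence and uniqueness in $(i)$.

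For $(ii)$, I would proceed in two steps. First, over $\overline{\bbq}$, I would invoke the theorem of the highest weight: for every dominant integral weight of the absolute root datum of $G_{\overline{\bbq}}$ there exists a unique irreducible $\overline{\bbq}$-representation realising it as its highest weight. A $\bbq$-abstract dominant weight $\chi=\sum_{i=1}^r c_i\chi_i$ need not be integral for the absolute root datum -- it is only required to pair integrally with the $\bbq$-coroots -- so I would first replace $\chi$ by a positive multiple $m\chi$ that meets the absolute integrality condition, producing an irreducible $\overline{\bbq}$-representation $V_{m\chi}$ of highest weight $m\chi$.

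The second step, and the main obstacle, is descending $V_{m\chi}$ to a $\bbq$-representation that is strongly rational over $\bbq$ in the sense of Borel--Tits. The weight $m\chi$ is automatically Galois-stable because the fundamental weights $\chi_i$ are defined on $\fraks$ and $\fraks$ is defined over $\bbq$; however, Galois-stability of the weight does not by itself force $V_{m\chi}$ to admit a $\bbq$-structure, since there is an obstruction in a suitable Galois cohomology group coming from the fact that the extremal weight vector is only defined up to $\overline{\bbq}^\times$. The standard way around this, following Tits' construction in [key-27, \S 12], is to enlarge $m$ further so that the obstruction is killed, and then to realise the representation via a Weyl-module construction performed directly over $\bbq$; the resulting representation is strongly rational over $\bbq$ and has $m\chi$ as its $\bbq$-highest weight, completing the proof.
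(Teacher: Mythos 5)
The paper does not actually prove this lemma---it is quoted directly from Borel--Tits \cite[\S 12]{key-27}---and your sketch is consistent with that: part $(i)$ is the standard fact that the closed fundamental chamber is a strict fundamental domain for $W\left(\Phi_{\bbq}\right)$ acting on $\fraks^{*}$, and part $(ii)$ ultimately defers the essential integrality/Galois-descent step (enlarging $m$ and constructing the strongly rational representation) to the very same construction in \cite[\S 12]{key-27}. So your proposal is correct in outline and takes essentially the same route as the paper, namely reducing to the cited Borel--Tits result.
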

 
 Recall that $\Gamma=G(\bbz)$ and $T$ is a maximal $\bbr$-split torus in $G$. 

\begin{lem}	\label{lem:factor equiv}
	Let $A$ be a subgroup of $T$, $G=G_{1}\times G_{2}$ be an almost $\bbq$-direct product, and for $i=1,2$, $A_i:=(A\cap G_i)^\circ$, $\Gamma_i:=G_i(\bbz)$. Then, there exist divergent trajectories for the action of $A$ on $G/\Gamma$ if and only if one of the following is satisfied: 
	\begin{enumerate}
		\item 
		There exist divergent trajectories for the action of $A_i$ on $G_{i}/\Gamma_i$ for both $i=1,2$.  
		\item For some $\{i,j\}=\{1,2\}$, $\dim A_i=0$ and there exist divergent trajectories for the action of $A_j$ on $G_{j}/\Gamma_j$. 
	\end{enumerate}
\end{lem}

\begin{proof}
	Since $G_1\times G_2$ is an almost $\bbq$-direct product, $\Gamma$ is commensurable with $\Gamma_{1}\times \Gamma_2$. Thus, by \cite[Lemma 6.1(2)]{key-3} we may replace $\Gamma$ with $\Gamma_{1}\times\Gamma_{2}$.
	
	Assume $(1)$, i.e., for $i=1,2$ there exists $x_i=g_i\Gamma_{i}\in G_{i}/\Gamma_{i}$ so that $A_{i}x_i$ is a divergent trajectory. Since $G_1$ and $G_2$ commute and $A=A_1 A_2$, we get that $Ag_1g_2\Gamma$ is also a divergent trajectory.
	

	Assume $(2)$, i.e., $\dim(A_1)=0$ and there exists $x=g\Gamma_{2}\in G_{2}/\Gamma_{2}$ so that $A_{2}x$ diverges. Since $A=A_1A_2$, any unbounded sequence in $A$ has an unbounded projection to $A_2$. Thus, $Ag\Gamma$ diverges in $G/\Gamma$.
	
	Assume there exists a divergent trajectory $Ax$ for $x\in G/\Gamma$ and that $\dim A_1\neq 0$. Then, $A_1 x$ also diverges in $G/\Gamma$. Since $G_1$ commutes with $G_2$, there must be a divergent trajectory for $A_1$ in $G_1/\Gamma_1$.
\end{proof}

\begin{thm} \cite[Corollary 2]{w Q-rank} \label{thm:big dim}
	Let $A$ be a subgroup of $G$ such that $\dim A>\mathrm{rank}_\bbq G$. Then, there are no divergent trajectories for the action of $A$ on $G/\Gamma$.
\end{thm}

\begin{proof}[Proof of Theorem \ref{thm:2}]
	We prove the theorem via induction on the number of $\bbq$-factors of $G$.   
		
	First, assume that $G$ is almost $\bbq$-simple. In this case the assumptions of the theorem are equivalent to 
	\begin{equation}\label{eq:dim equiv}
	0<\dim(A)\leq\mathrm{rank}_\bbq G.
	\end{equation}
	
	Assume (\ref{eq:dim equiv}). Then according to Theorem \ref{thm:maximal almost split}, there exists $n\in N_G\left(T\right)$ such that $\mbox{Ad}\left(n\right)A$ is almost $\bbq$-split. We claim that $An^{-1}\Gamma$ is a divergent trajectory in $G/\Gamma$. 
	It is equivalent to showing that $\mbox{Ad}\left(n\right)A\Gamma$ diverges in $G/\Gamma$. 
	
	Let $\left\{a_k\right\}$ be an unbounded sequence in the Lie algebra of $\mbox{Ad}\left(n\right)A$. Since $\mbox{Ad}\left(n\right)A$ is almost $\bbq$-split and the $\bbq$-fundamental weights span $\fraks^*$, we may deduce that for some $1\leq i\leq r$ the sequence ${\chi}_i(a_{k})$ is unbounded. 
	
	Assume there exists a subsequence $\left\{a_{k_\ell}\right\}$ such that 
	\begin{equation}\label{eq:chi +infty}
	{\chi}_i(a_{k_\ell})\rightarrow+\infty. 
	\end{equation}
	By Lemma \ref{lem: exist rho 1} there exist a positive integer $m$ and a strongly rational over $\bbq$-representation $\varrho$ with a $\bbq$-highest weight $m\cdot w\left(-\chi_i\right)$, for some $w\in W\left(\Phi_{\bbq}\right)$. The set of $\bbq$-weight of $\varrho$ are invariant under the action of the $\bbq$-Weyl group, and so is their dimension (see \cite[\S 12]{key-27}). Hence, Definition \ref{def:strongly rational} implies that the subspace $V_{\varrho,-m\chi_i}$ is one dimensional.  
	Let $v\in V_{\varrho,m\chi_i}$ be a rational vector. Then, Lemma \ref{lem:chi tilde} and (\ref{eq:chi +infty}) imply 
	\[
	\varrho\left(\exp\left(a_{k_\ell}\right)\right)v=e^{-m\tilde{\chi}_{i}\left(a_{k_\ell}\right)}v\rightarrow 0.
	\]
	
	The case \[
	{\chi}_i(a_{k_\ell})\rightarrow-\infty,
	\]
	can be shown in a similar way.
	
	If (\ref{eq:dim equiv}) is not satisfied, then either $\dim A=0$, in which case, there are obviously no divergent trajectories for the action of $A$ on $G/\Gamma$, or $\dim A>\rank_{\bbq}G$, and then Theorem \ref{thm:big dim} implies the same conclusion. Proving the claim for the case $G$ is almost $\bbq$-simple. 
	 
	Second,	assume that $G=G_1\times G_2$ is a non-trivial $\bbq$-almost direct product. Let $A_1:=(A\cap G_1)^\circ$ and $A_2:=(A\cap G_2)^\circ$. 
	Then, (\ref{eq:main}) is equivalent to the following:
	\begin{equation} \label{eq:A1,A2 def}
	A=A_1 A_2,\quad \dim A_1\le\rank_\bbq G_1,\quad \mathrm{and}\quad \dim A_2\le\rank_\bbq G_2.
	\end{equation}
	
	The claim now follows from the induction assumption and Lemma \ref{lem:factor equiv}.
\end{proof}

\section{A special case of Theorem \ref{thm:main}}\label{sec: example}

Before presenting the proof of Theorem \ref{thm:main}, we show an unknown case of it (see \cite[Corollary 4.15]{w}). It allows us to explain the main ideas of the general case.  

\begin{example}\label{ex: weiss}
	Assume $G = \sl_4(\bbr)$, $\Gamma=\sl_4(\bbz)$, and \[	A=\left\{\begin{pmatrix}e^s & & & \\		& e^{-s} & & \\		& & e^t & \\	& & & e^{-t}		\end{pmatrix}\right\}.\]
	Then, there exist non-obvious divergent trajectories for the action of $A$ on $G/\Gamma$.  
\end{example}

\begin{proof}
	Note that $A$ is the subgroup of \[T:=\left\{\begin{pmatrix}	e^{t_1} & & & \\	& e^{t_2} & & \\	& & e^{t_3} & \\	& & & e^{t_4}	\end{pmatrix}\::\:\sum_{i=1}^4 t_i=0,\right\}\] which is defined by $\chi=0$ for $\chi:$\[	\chi(t_1,t_2,t_3,t_4):= t_1+t_2. \]  
	
	Recall that a $\bbq$-simple system here is \[	\alpha_i:=t_i-t_{i+1},\quad i=1,2,3.\]	Hence, 	
	\begin{equation}\label{eq: sl4 character}
	\chi=\frac{1}{3}(\alpha_1+2\alpha_2+\alpha_{3}). 
	\end{equation}	
	
	Let $\{a_k\}$ be an unbounded sequence in $A$. Then, there exists $1\le i\le4$ such that the sequence $\{\alpha_{i}(a_k)\}$ is unbounded. 	
	Since all elements in $A$ satisfy (\ref{eq: sl4 character}),  there exists a sub-sequence $\{a'_k\}\subset\{a_k\}$ such that for some $1\leq \ell\neq j\leq 3$ \begin{equation}\label{eq: unbounded seq}
	\alpha_{\ell}(a'_k)\rightarrow+\infty,\quad \alpha_{j}(a'_k)\rightarrow-\infty.
	\end{equation} 
	Equation (\ref{eq: unbounded seq}) can also be verified using that for any element $a\in A$, \[
	\alpha_{1}(a)=2s(a),\quad\alpha_{2}(a)=(-s-t)(a),\quad\alpha_{3}(a)=2t(a). \]
	
	Let $\varrho_{1}:G\rightarrow\mathrm{GL}(\bbr^4)$ be the standard representation. For $i=2,3$ let $\varrho_{i}:G\rightarrow\mathrm{GL}(\bigwedge^i\bbr^4)$ be the $i$-th wedge product of the standard representation. Let $e_1,e_2,e_3,e_4$ be the standard basis of $\bbr^4$. Let
	\begin{eqnarray*}
		&&v_1^+:=e_1,\quad v_2^+:=e_2\wedge e_4,\quad v_3^+:=e_1\wedge e_2\wedge e_3,\\
		&&v_1^-:=e_4,\quad v_2^-:=e_1\wedge e_3,\quad v_3^-:=e_2\wedge e_3\wedge e_4,
	\end{eqnarray*}
	and \[
	P_+:=\left\{\begin{pmatrix}	\star & 0 & \star & 0 \\	
	0 & \star & \star & \star \\	
	0 & 0 & \star & 0 \\	
	0 & 0 & 0 & \star	\end{pmatrix}\right\},\quad P_-:=\left\{\begin{pmatrix}	\star & 0 & 0 & 0 \\	
	0 & \star & 0 & 0 \\	
	\star & \star & \star & 0 \\	
	0 & \star & 0 & \star	\end{pmatrix}\right\}. \]
	Then,  $\varrho_{i}^{\pm}:=\varrho_{i}$, $v^\pm_i$, $P_\pm$ satisfy $(ii)$, $(iii)$, $(iv)$, $(v)$, and $(vi)$.
	
	For any $a\in A$ we have \[
	\varrho_2(a)v^{\pm}_i=(-\chi\pm\alpha_{i})(a)v^{\pm}_i\]
	and for $i=1,3$ \[
	\varrho_i(a)v^{\pm}_i=\frac{1}{2}(\chi\pm\alpha_{i})(a)v^{\pm}_i. \]
	Thus, $(i)$ follows from (\ref{eq: unbounded seq}). 	
\end{proof}

\section{\label{sec:Rational Characters and Divergent Trajectories}Rational Characters and Divergent Trajectories}
We preserve the notation of \S 3 and \S 4. 

The goal of this section is to prove Theorem \ref{thm:main}. For the proof we need the following results:

\begin{lem}
	\label{lem: chi_i all pos}For any $i=1,\dots,r$ there exists $d_{i}>0$ such that 
	\[
	\chi_{i}=d_{i}\sum_{\beta\in\Phi_{\bbq},\beta\geq\alpha_{i}}\beta
	\]
	(where $\chi_1,\dots,\chi_r$ are defined in (\ref{eq:fundamental weights})). 
\end{lem}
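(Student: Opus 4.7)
The plan is to prove that $\sigma_i:=\sum_{\beta\in\Phi_{\bbq},\,\beta\geq\alpha_i}\beta$ is fixed by every simple reflection $s_{\alpha_j}$ with $j\neq i$. Since the fundamental weights $\chi_1,\dots,\chi_r$ form the dual basis to $\alpha_1,\dots,\alpha_r$ under $\langle\cdot,\cdot\rangle$, anything orthogonal to all $\alpha_j$ with $j\neq i$ must be a scalar multiple of $\chi_i$; the remaining task is then to check the scalar is positive.

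First I would rewrite the indexing set. Writing each positive $\bbq$-root as $\beta=\sum_k c_k(\beta)\alpha_k$ with $c_k(\beta)\in\bbz_{\geq 0}$, the condition $\beta\geq\alpha_i$ in the root partial order is equivalent to $c_i(\beta)\geq 1$. Call this set $S_i$; it contains $\alpha_i$ itself. Next I would show that $s_{\alpha_j}$ permutes $S_i$ whenever $j\neq i$. The reflection $s_{\alpha_j}$ modifies only the $\alpha_j$-coefficient of its argument, so $c_i(s_{\alpha_j}(\beta))=c_i(\beta)$; and $s_{\alpha_j}$ permutes $\Phi_{\bbq}^{+}\setminus\{\alpha_j\}$ by the standard property of simple reflections, sending only $\alpha_j$ to $-\alpha_j$. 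Since $\alpha_j\notin S_i$ (its $\alpha_i$-coefficient is $0$), the image $s_{\alpha_j}(S_i)$ consists of positive roots with $c_i\geq 1$, hence lies in $S_i$, and the same applied to $s_{\alpha_j}^{-1}=s_{\alpha_j}$ gives equality.

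Summing over $S_i$ then yields $s_{\alpha_j}(\sigma_i)=\sigma_i$ for every $j\neq i$. By the definition (\ref{eq: Weyl group def}) of $s_{\alpha_j}$, this forces $\langle\sigma_i,\alpha_j\rangle=0$ for all $j\neq i$, and together with (\ref{eq:fundamental weights}) we conclude $\sigma_i=d_i\chi_i$ for some $d_i\in\bbr$.

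For the positivity of $d_i$ I would compare the $\alpha_i$-coefficients on both sides when expanded in the simple root basis. On the one hand, $\sigma_i$ has strictly positive $\alpha_i$-coefficient, since $\alpha_i\in S_i$ and every $\beta\in S_i$ contributes at least $1$ to the $\alpha_i$-coefficient. On the other hand, because $\Phi_{\bbq}$ is irreducible (as noted in the Remark of \S\ref{sec:maximal almost}), each fundamental weight $\chi_i$ has all strictly positive coefficients in the simple-root basis, a standard fact about irreducible root systems (see e.g.\ Humphreys, \S13). Comparing the two coefficients of $\alpha_i$ forces $d_i>0$. The main obstacle is really only the second step — verifying that $s_{\alpha_j}$ genuinely stabilizes $S_i$ for $j\neq i$ — and this rests on the basic fact that a simple reflection negates only its own simple root among the positive roots.
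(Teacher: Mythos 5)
Your proposal is correct, and its first two thirds coincide with the paper's argument: the paper also observes that for $j\neq i$ the simple reflection $s_{\alpha_{j}}$ permutes $\left\{ \beta\in\Phi_{\bbq}\,:\,\beta\geq\alpha_{i}\right\}$, hence fixes $\gamma_{i}=\sum_{\beta\geq\alpha_{i}}\beta$, hence $\left\langle \gamma_{i},\alpha_{j}\right\rangle =0$, so $\gamma_{i}$ is proportional to $\chi_{i}$ (you justify the permutation claim more explicitly, via $c_{i}\bigl(s_{\alpha_{j}}(\beta)\bigr)=c_{i}(\beta)$ and the fact that $s_{\alpha_{j}}$ permutes the positive roots other than $\alpha_{j}$, which is exactly what the paper's one-line assertion rests on). Where you genuinely diverge is the positivity of $d_{i}$: the paper stays self-contained, writing $\gamma_{i}=\sum_{j}k_{j}\alpha_{j}$ with $k_{j}\geq0$ and using $0<\left\langle \gamma_{i},\gamma_{i}\right\rangle =k_{i}\left\langle \gamma_{i},\alpha_{i}\right\rangle$ to get $\left\langle \gamma_{i},\alpha_{i}\right\rangle >0$ and compare with $\left\langle \chi_{i},\alpha_{i}\right\rangle =1$, whereas you import the classical fact that for an irreducible system every fundamental weight has strictly positive coefficients on the simple roots (positivity of the inverse Cartan matrix). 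One caveat about your citation of the Remark in \S\ref{sec:maximal almost}: that irreducibility assumption is declared only for that section, and this lemma is applied in \S\ref{sec:Rational Characters and Divergent Trajectories} precisely in a situation where $\Phi_{\bbq}$ may have several irreducible components $\Phi_{1},\dots,\Phi_{q}$. This is easily repaired: you only need the $\alpha_{i}$-coefficient of $\chi_{i}$ to be positive, and since $\chi_{i}$ is orthogonal to the simple roots of every other component it lies in the span of the component containing $\alpha_{i}$, so the inverse-Cartan fact applied to that component suffices; the paper's self-pairing argument buys independence from this external fact and works verbatim in the reducible case, while yours is shorter if one is willing to quote it.
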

\begin{proof}
	Let $1\leq i\neq j\leq r$ and denote $\gamma_{i}=\sum_{\beta\in\Phi_{\bbq},\beta\geq\alpha_{i}}\beta$.
	By (\ref{eq: Weyl group def}) for any $\beta\in\Phi_{\bbq}$,  $\beta\geq\alpha_{i}$ implies 	$s_{\alpha_{j}}\left(\beta\right)\geq\alpha_{i}$.
	Hence, $w_{\alpha_{j}}$ is bijective on the set 
	\[
	\left\{ \lambda\in\Phi_{\bbq}\::\:\lambda\geq\alpha_{i}\right\} .
	\]
	Therefore, $w_{\alpha_{j}}\left(\gamma_{i}\right)=\gamma_{i}$.
	Using (\ref{eq: Weyl group def}) we may deduce $\left\langle \gamma_{i},\alpha_{j}\right\rangle =0$. 
	
	Since $\gamma_{i}$ is a sum of positive roots, there exist non-negative integers $k_{1},\dots,k_{r}$ such that $\gamma_{i}=\sum_{j=1}^{r}k_{j}\alpha_{j}$.
	Since the Killing form is nondegenerate on $\mathfrak{s}^{*}$ and $\gamma_{i}\neq0$, we arrive at 
	\[
	0<\left\langle \gamma_{i},\gamma_{i}\right\rangle =\sum_{j=1}^{r}k_{j}\left\langle \gamma_{i},\alpha_{j}\right\rangle =k_{i}\left\langle \gamma_{i},\alpha_{i}\right\rangle .
	\]
	Thus $\left\langle \gamma_{i},\alpha_{i}\right\rangle >0$ and so (\ref{eq:fundamental weights}) implies that there exists $d_{i}>0$ which satisfies the conclusion of the lemma. 
\end{proof}

\begin{lem}
	\label{lem:w(chi)+lambda}Let $\varrho:G\rightarrow\mbox{GL}\left(V\right)$ be a strongly rational over $\bbq$-representation with a $\bbq$-highest weight $\chi$. If $w\in W\left(\Phi_\bbq\right)$ and $\beta\in\Phi_\bbq$ satisfy $w\left(\chi\right)+\beta\in\Phi_{\varrho}$, then $\left\langle w\left(\chi\right),\beta\right\rangle <0$ (see (\ref{eq: Weyl group def}) for the definition of the Weyl group). 
\end{lem}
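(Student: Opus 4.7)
The plan is to transport the statement to the highest weight $\chi$ itself via the Weyl group, and then extract the inequality from a $\beta'$-string argument. Set $\beta':=w^{-1}(\beta)\in\Phi_{\bbq}$. At the outset I would invoke two standard facts: first, that the weight set $\Phi_{\varrho}$ is stable under $W(\Phi_{\bbq})$ (the same fact that underlies Lemma \ref{lem:dim weyl}), and second, that the pairing $\langle\cdot,\cdot\rangle$ defined in (\ref{eq:innr def}) is Weyl-invariant, because the Weyl reflections are $\kappa$-isometries on $\frakt^{*}$. Applying $w^{-1}$ to $w(\chi)+\beta$ therefore yields $\chi+\beta'\in\Phi_{\varrho}$, while simultaneously $\langle w(\chi),\beta\rangle=\langle\chi,\beta'\rangle$. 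The whole problem is thus reduced to showing $\langle\chi,\beta'\rangle<0$.

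The next step is to pin down the sign of $\beta'$. Since $\chi$ is the $\bbq$-highest weight, the containment $\chi+\beta'\in\Phi_{\varrho}$ forces $\chi+\beta'\leq\chi$, i.e., $-\beta'$ is a non-negative integer combination of simple positive $\bbq$-roots. Combined with $\beta'\in\Phi_{\bbq}$, this means $-\beta'$ is a positive $\bbq$-root and hence $\beta'$ is negative.

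The decisive step is to examine the $\beta'$-string of weights through $\chi$. The $\mathfrak{sl}_{2}$-triple attached to $\beta'\in\Phi_{\bbq}$ acts on $V$, and its representation theory gives the standard string description: the set $\{k\in\bbz:\chi+k\beta'\in\Phi_{\varrho}\}$ is an unbroken integer interval $\{-p,\dots,q\}$ with $p,q\geq 0$ and $p-q=\langle\chi,\beta'\rangle$. From the hypothesis $\chi+\beta'\in\Phi_{\varrho}$ we read off $q\geq 1$. On the other hand, since $-\beta'$ is a positive root, for any integer $k\geq 1$ the element $\chi-k\beta'$ strictly exceeds $\chi$ in the dominance order and therefore, by the highest-weight property, is not in $\Phi_{\varrho}$; hence $p=0$. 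Combining, $\langle\chi,\beta'\rangle=p-q=-q\leq-1<0$, which is the desired inequality.

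I expect the only point needing a small argument to be the invocation of the string formula in the $\bbq$-rational setting, but this should be routine: the root space $\frakg_{\beta'}$ is defined over $\bbq$, the associated $\mathfrak{sl}_{2}$-triple acts on $V$, and the weights along the $\beta'$-direction decompose exactly as in the absolute case, so the standard identity $p-q=\langle\chi,\beta'\rangle$ carries over verbatim. Everything else in the argument is just bookkeeping with the Weyl-invariance of $\Phi_{\varrho}$ and of the pairing.
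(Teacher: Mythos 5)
Your argument is correct, and its first half (passing to $\beta'=w^{-1}(\beta)$ via Weyl-stability of $\Phi_{\varrho}$ and Weyl-invariance of the pairing, then forcing $\beta'<0$ from the highest-weight property) coincides with the paper's. The finishing move differs: the paper stays entirely inside the Weyl-group formalism, applying the single reflection $s_{\beta'}$ to the weight $\chi+\beta'$ to get $s_{\beta'}\left(\chi+\beta'\right)=\chi-\left(l+1\right)\beta'$ with $l=\left\langle \chi,\beta'\right\rangle $, which lies in $\Phi_{\varrho}$ by the same Weyl-stability; since $-\beta'>0$ and $\chi$ is highest, this immediately forces $l\leq-1$. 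You instead invoke the full $\beta'$-string through $\chi$ with the identity $p-q=\left\langle \chi,\beta'^{\vee}\right\rangle $, which in the relative ($\bbq$-root) setting requires producing an $\mathfrak{sl}_{2}$-triple $\left(e,\beta'^{\vee},f\right)$ with $e\in\frakg_{\beta'}$, $f\in\frakg_{-\beta'}$ and rerunning the string argument for restricted weights (possibly with $2\beta'$ also a root and with weight multiplicities); this is indeed standard, e.g.\ via Borel--Tits or the usual construction of such triples for restricted roots, but it is a genuinely heavier input than the paper needs, and it is exactly the step you defer as ``routine.'' Both routes give the sharper bound $\left\langle \chi,\beta'\right\rangle \leq-1$, and the discrepancy between the paper's normalization of the pairing in (\ref{eq:innr def}) and the coroot normalization in your string formula is harmless here, since only the sign (equivalently, the sign of $\kappa\left(t_{\chi},t_{\beta'}\right)$) is at stake.
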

\begin{proof}
	Assume $w\in W\left(\Phi_\bbq\right)$ and $\beta\in\Phi_\bbq$ satisfy $w\left(\chi\right)+\beta\in\Phi_{\varrho}$. Since $\Phi_{\bbq}$ is invariant under the action of the Weyl group, we have  $\chi+w^{-1}\left(\beta\right)\in\Phi_{\varrho}$.  
	Since $\chi$ is a $\bbq$-highest weight, we may deduce 	
	\begin{equation}\label{eq:w_beta<0}
	w^{-1}\left(\beta\right)<0.
	\end{equation}
	
	Let $l:=\left\langle w\left(\chi\right),\beta\right\rangle$. Since the Killing form is invariant under  automorphisms of $\frakg$ we have  $l=\left\langle \chi,w^{-1}\left(\beta\right)\right\rangle $. Then, by (\ref{eq: Weyl group def}) and the linearity of the killing form we have 
	\begin{equation}\label{eq:l}
	s_\beta\left(\chi+w^{-1}\left(\beta\right)\right)=\chi-\left(l+1\right)w^{-1}\left(\beta\right).
	\end{equation}
	Since $\chi$ is a $\bbq$-highest weight, it follows from (\ref{eq:w_beta<0}) and (\ref{eq:l}) that $l\leq -1$.	 
\end{proof}

The following lemma can be proved in a similar way to Lemma \ref{lem:factor equiv}. 
\begin{lem}	\label{lem:factor for main}
	Let $A$ be a subgroup of $T$ and $G=G_{1}\times G_{2}$ be an almost $\bbq$-direct product, and for $i=1,2$, $A_i:=(A\cap G_i)^\circ$, $\Gamma_{i}:=G_i(\bbz)$. If there exist divergent trajectories for the action of $A_1$ on $G_1/\Gamma_1$ and non-obvious divergent trajectories for the action of $A_2$ on $G_2/\Gamma_2$, then there exist non-obvious divergent trajectories for the action of $A$ on $G/\Gamma$. 
\end{lem}

\begin{lem}\label{lem:highest root}
	\cite[\S VI 1.8]{bou} If $\Phi_\bbq$ is irreducible, then there exists a root $\beta\in\Phi_\bbq$ such that every  $\alpha\in\Phi_\bbq$ satisfies $\beta\geq\alpha$.
\end{lem}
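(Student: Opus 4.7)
The plan is to prove the existence of a highest root by the classical argument due to Bourbaki (see also Humphreys, \S10.4). Since $\Phi_\bbq$ is finite, pick $\beta \in \Phi_\bbq$ that is maximal with respect to the partial order $\geq$; I will show that this $\beta$ in fact dominates every root, which is strictly stronger than being maximal.

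The first step is to establish three structural properties of such a maximal $\beta$: (a) $\beta$ is positive, because otherwise any simple root would exceed it; (b) $\beta$ is dominant, meaning $\left\langle \beta, \alpha_i \right\rangle \geq 0$ for every simple root $\alpha_i \in \Delta_\bbq$, since $\left\langle \beta, \alpha_i \right\rangle < 0$ would force $\beta + \alpha_i \in \Phi_\bbq$ via the standard root-string argument, contradicting maximality; and (c) writing $\beta = \sum_{i=1}^{r} k_i \alpha_i$ in the simple basis, every coefficient $k_i$ is strictly positive. Property (c) is where irreducibility is essential: if $J := \{i : k_i > 0\}$ were a proper subset of $\{1,\dots,r\}$, then connectedness of the Dynkin diagram of $\Phi_\bbq$ produces indices $i \in J$ and $j \notin J$ with $\left\langle \alpha_i, \alpha_j \right\rangle < 0$, and hence $\left\langle \beta, \alpha_j \right\rangle = \sum_{i \in J} k_i \left\langle \alpha_i, \alpha_j \right\rangle < 0$, contradicting (b).

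The next step is to prove uniqueness. If $\beta'$ is a second maximal root, the same reasoning gives $\beta' = \sum_i k_i' \alpha_i$ with $k_i' > 0$ and $\beta'$ dominant. One then computes $\left\langle \beta, \beta' \right\rangle = \sum_i k_i' \left\langle \beta, \alpha_i \right\rangle \geq 0$, and equality would force $\left\langle \beta, \alpha_i \right\rangle = 0$ for every $i$ and hence $\beta = 0$, a contradiction. Since the inner product is strictly positive, the standard fact that two distinct roots with strictly positive inner product differ by a root yields $\beta - \beta' \in \Phi_\bbq$; taking it to be a positive root gives $\beta > \beta'$, contradicting maximality of $\beta'$. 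So $\beta = \beta'$.

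Finally I would upgrade maximality to full domination. When $\alpha \in \Phi_\bbq$ is negative, both $\beta$ and $-\alpha$ expand with non-negative coefficients in the simple basis (the former strictly positive by (c)), so $\beta - \alpha \geq 0$ and $\beta \geq \alpha$. When $\alpha$ is positive, the finite nonempty set $\{\gamma \in \Phi_\bbq : \gamma \geq \alpha\}$ contains a maximal element, which is automatically maximal in all of $\Phi_\bbq$ and thus equals $\beta$ by the uniqueness step, giving $\beta \geq \alpha$. The main obstacles I expect are property (c) and the uniqueness argument, both of which rely essentially on irreducibility of $\Phi_\bbq$ together with the standard combinatorics of root strings.
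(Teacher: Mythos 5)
Your proof is correct, and since the paper gives no proof of this lemma---it only cites Bourbaki VI \S 1.8---your argument is essentially the same as the classical one being cited (also Humphreys \S 10.4, Lemma A): a maximal root is positive and dominant, irreducibility forces all its simple coefficients to be strictly positive, whence uniqueness and full domination follow. One cosmetic remark: the pairing $\left\langle \cdot,\cdot\right\rangle $ of (\ref{eq:innr def}) is normalized by its first argument and hence not linear in it, so your expansions of $\left\langle \beta,\alpha_{j}\right\rangle $ over the first slot should be read as statements about the underlying Killing form; since only signs are used, this changes nothing.
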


\begin{lem}	\label{lem:dim weyl}\cite[§12]{key-27}	Let $\varrho:G\rightarrow\mbox{GL}\left(V\right)$ be a strongly rational over $\bbq$ representation with a $\bbq$-highest weight $\chi$ and $w\in W\left(\Phi_\bbq\right)$. Then the dimension of $V_{\varrho,w\left(\chi\right)}$ is one, in particular, $w\left(\chi\right)$ is a $\bbq$-weight for $\varrho$.\end{lem}

\begin{thm}\label{thm: weiss for epsilon case}
	\cite[Thm. 4.13]{w} Let $G$
	be a semisimple $\bbq$-algebraic group, $\Gamma=G\left(\bbz\right)$,
	and $A$ be a subgroup of $T$. Suppose that there are subgroups $P_{+},P_{-}$,
	finitely many $\bbq$-representations $\varrho_{i}^{+}:G\rightarrow\mbox{GL}\left(V_{i}^{+}\right)$,
	$\varrho_{i}^{-}:G\rightarrow\mbox{GL}\left(V_{i}^{-}\right)$, non-zero
	vectors $v_{i}^{+}\in V_{i}^{+}\left(\bbq\right)$, $v_{i}^{-}\in V_{i}^{-}\left(\bbq\right)$,
	such that the following hold:
	\begin{enumerate}
		\item[$\left(i\right)$] For any unbounded sequence $\left\{ a_{k}\right\} \subset A$ there
		is a subsequence $\left\{ a_{k}^{\prime}\right\} $ and $i$ such
		that $\varrho_{i}^{\pm}\left(a_{k}^{\prime}\right)v_{i}^{\pm}\underset{n\rightarrow\infty}{\longrightarrow}0$. 
		\item[$\left(ii\right)$] For each $i$, $\varrho_{i}^{\pm}\left(P_{\pm}\right)$ leaves the
		line $\bbr\cdot v_{i}$ invariant. 
		\item[$\left(iii\right)$] $P_{\pm}=\overline{P_{\pm}\cap G\left(\bbq\right)}$. 
		\item[$\left(iv\right)$] $T\subset P_{\pm}$.
		\item[$\left(v\right)$] For any $\bbr$-root $\alpha$, if $\frakg_{\alpha}\cap\mbox{Lie}\left(P_{\pm}\right)\neq\left\{ 0\right\} $
		then $\frakg_{\alpha}\subset\mbox{Lie}\left(P_{\pm}\right)$. 
		\item[$\left(vi\right)$] $P_{+}$ and $P_{-}$ generate $G$.
	\end{enumerate}
	Then there are non-obvious divergent trajectories for $A$.
\end{thm}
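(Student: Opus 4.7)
The plan is to build $g=u_0\in G(\bbr)$ inside a suitable unipotent $\bbq$-subgroup $U$ complementary to $P_+$, chosen generically so that the coordinates of $u_0$ are $\bbq$-linearly independent. Under this choice, $Au_0\Gamma$ will be divergent by hypothesis (i) together with Dirichlet approximation, while the genericity of $u_0$ defeats any single algebraic certificate for divergence. First I would unpack $P_\pm$ using (iii)--(vi): (iv) and (v) force $\mbox{Lie}(P_\pm)=\frakt\oplus\bigoplus_{\alpha\in\Psi_\pm}\frakg_\alpha$ for closed root subsets $\Psi_\pm\subset\Phi_\bbr$, (iii) makes each $P_\pm$ defined over $\bbq$, and (vi) makes $P_\pm$ opposite parabolic-like $\bbq$-subgroups. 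Let $U\subset G$ be the $\bbq$-unipotent subgroup with $\mbox{Lie}(U)=\bigoplus_{\alpha\in\Psi_-\setminus\Psi_+}\frakg_\alpha$, so that $U\cdot P_+$ is Zariski open in $G$. Hypothesis (ii) together with the triviality of characters on unipotent groups implies each $v_i^+$ is fixed by the unipotent radical of $P_+$ and is a $T$-weight vector of weight $\chi_i^+$ --- a highest-weight-type vector --- and analogously for $v_i^-$.

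For divergence, take an unbounded $\{a_k\}\subset A$ and use (i) to extract a subsequence $\{a_k'\}$ with $\varrho_i^\pm(a_k')v_i^\pm\to 0$ for some $i$ and sign. Using Theorem \ref{thm:dirichlet}, I approximate $u_0$ by a sequence $u_N\in U(\bbq)$ with a common denominator $q_N$ satisfying the Dirichlet bound; after absorbing $u_N$ into $\Gamma$ via the integral structure on $U$ and exploiting that $v_i^\pm$ is highest-weight (so the $U$-action on $v_i^\pm$ preserves its $\chi_i^\pm$-component while adding only strictly lower weight contributions), I would conclude that $\|\varrho_i^\pm(a_k'u_0)w_k\|\to 0$ for a rational vector $w_k\in V_i^\pm(\bbq)$ depending on the subsequence. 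By reduction theory for $G/\Gamma$ this is enough to force $Au_0\Gamma$ to leave every compact set.

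The main obstacle is non-obviousness: producing an unbounded $\{a_k\}\subset A$ such that no pair $(\varrho,v)$ of $\bbq$-representation and rational vector certifies divergence of any subsequence of $\{a_ku_0\}$. For any such $(\varrho,v)$, decompose $\varrho(u_0)v=\sum_\lambda w_\lambda$ into $T$-weight components, so that $\varrho(a_k'u_0)v=\sum_\lambda e^{\lambda(a_k')}w_\lambda$, and convergence to zero along a subsequence requires $e^{\lambda(a_k')}|w_\lambda|\to 0$ for every $\lambda$ with $w_\lambda\neq 0$. The generic/irrational choice of $u_0$ forces every weight reachable from $v$ by the $U$-action on $V(\bbq)$ to appear with nonzero coefficient $w_\lambda$, and Lemmas \ref{lem:w(chi)+lambda} and \ref{lem: chi_i all pos} pin down exactly which weights must appear: at least one weight in the positive cone generated by a suitable $\bbq$-fundamental weight always occurs. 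Picking $\{a_k\}$ along a direction in $\frakt$ that expands at least one such weight for each $(\varrho,v)$ and then diagonalizing over the countable collection of rational pairs --- selecting $u_0$ outside a countable union of exceptional thin sets --- produces the desired $\{a_k\}$ that defeats every potential certifier. The technical heart is making the genericity quantitative and uniform across all $\bbq$-representations and vectors simultaneously, which is where the root combinatorics and Dirichlet's theorem combine.
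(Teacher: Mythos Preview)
The paper does not prove this theorem: it is quoted from Weiss \cite[Thm.~4.13]{w} and used as a black box in the proof of Theorem~\ref{thm:main}. There is therefore no in-paper argument to compare your proposal against.

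Two remarks on your sketch nonetheless. First, your appeal to Lemmas~\ref{lem: chi_i all pos} and~\ref{lem:w(chi)+lambda} is misplaced: in the paper those lemmas serve to \emph{verify} hypotheses (i)--(vi) for a particular choice of $\varrho_i^\pm,v_i^\pm,P_\pm$ in the proof of Theorem~\ref{thm:main}; they play no role in establishing Theorem~\ref{thm: weiss for epsilon case} itself, whose proof in \cite{w} is self-contained relative to its hypotheses. Second, while your overall strategy --- choose a generic $u_0$ in a unipotent $\bbq$-complement to $P_+$, obtain divergence from (i) via rational approximation, and obtain non-obviousness from the irrationality of the coordinates of $u_0$ --- is broadly in the spirit of Weiss's argument, the non-obviousness step as you have written it is not yet a proof. ``Diagonalizing over the countable collection of rational pairs'' and placing $u_0$ outside countably many thin sets does not by itself manufacture a single unbounded sequence $\{a_k\}\subset A$ along which \emph{every} pair $(\varrho,v)$ fails to certify; you must exhibit such a sequence and show, for each fixed $(\varrho,v)$, that some weight component $w_\lambda$ of $\varrho(u_0)v$ stays bounded away from zero along it. The genericity of $u_0$ controls which $w_\lambda$ are nonzero, but the existence of a direction in $A$ that is non-contracting on at least one such $\lambda$ for every $(\varrho,v)$ simultaneously is a separate issue that depends on the structure of $A$ and is not addressed in your outline.
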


\begin{proof}[{Proof of Theorem \ref{thm:main}}]
	
Let $A$ be a subgroup of $T$ which satisfy the assumptions. Note that by Theorem \ref{thm:2} and Lemma \ref{lem:factor for main} we may assume that $G$ is almost $\bbq$-simple. In particular, $\Phi_{\bbq}$ is irreducible and $0<\dim A<\mbox{rank}_{\bbq}G$.

We show the claim by finding representations, vectors, and subgroups which satisfy the assumptions of Theorem \ref{thm: weiss for epsilon case} for $A$.

According to Theorem \ref{thm:maximal almost split}, applying conjugation we may assume that $A$ is almost $\bbq$-split.
Since $\dim A<\mbox{rank}_{\bbq}G$, Lemma \ref{lem:exists Q-caracter} implies that there exists a non-zero $\bbq$-character $\chi$ which is trivial on $A$. 

First, for some choice of a $\bbq$-simple system, $\chi$ is in the positive $\bbq$-Weyl chamber. That is, there exist $b_{1},\dots,b_{r}\geq0$, not all zero, such that 
\begin{equation}\label{eq:chi decomp into chi_i}
\chi=\sum_{i=1}^{r}b_{i}\chi_{i}.
\end{equation}
Since $\Phi_\bbq$ is irreducible, it follows from Lemma \ref{lem: chi_i all pos} that there exist positive $d_{1},\dots,d_{r}$ such that 
\begin{equation}\label{eq: chi decomp into alpha_i}
\chi=\sum_{i=1}^{r}d_{i}\alpha_{i}.
\end{equation}
Assume that  $\left\{ a_{k}\right\}$ is an unbounded sequence in $A$.
Then, for some $1\le i\le r$, the sequence $\alpha_{i}(a_k)$ is unbounded. It follows from (\ref{eq: chi decomp into alpha_i}) that after passing to a subsequence $\left\{ a_{k}^{\prime}\right\} \subset\left\{ a_{k}\right\} $,
there exist $1\leq\ell,j\leq r$ such that 
\begin{equation}
\alpha_{\ell}\left(a_{k}^{\prime}\right)\rightarrow+\infty\quad\mbox{ and }\quad\alpha_{j}\left(a_{k}^{\prime}\right)\rightarrow -\infty.\label{eq:a_i, a_j unbounded}
\end{equation}

We now explain the main idea of the rest of the proof. 
If $b_1,\dots,b_r$ in (\ref{eq:chi decomp into chi_i}) are all positive integers, then, up to replacing $\chi$ with an integer multiple of it, $\chi+\alpha_{i}$, for each $1\le i\le r$, is a $\bbq$-abstract dominant weight. Therefore, for any $1\le i\le r$, one can construct $\bbq$-representations $\varrho_{i}^{+}$ and $\varrho_{i}^{-}$ with $\bbq$-highest weight $\chi+\alpha_{i}$ and $\bbq$-lowest weight $-\chi-\alpha_{i}$, respectively (see Lemma \ref{lem: exist rho 1}). Then, in a similar way to the proof of Example \ref{ex: weiss}, for any $1\le i\le r$, we could take $v_{i}^{+}$ and $v_{i}^{-}$ to be the $\bbq$-highest weight vector of $\varrho_{i}^{+}$ and $\bbq$-lowest weight vector of $\varrho_{i}^{-}$, respectively. In that case, as $P_{+}$ and $P_{-}$ one could take two opposite minimal $\bbq$-parabolic groups. 

Since  $b_1,\dots,b_r$ in (\ref{eq:chi decomp into chi_i}) are not necessarily integers, we start by finding a rational approximation $\chi'$ to $\chi$. 
Note that the coefficients $b_1,\dots,b_r$ in (\ref{eq:chi decomp into chi_i}) are also not necessarily all positive. Lemma \ref{lem: exist rho 1} can still be used to construct representations with $\chi+\alpha_i$, for $1\leq i\leq r$, as $\bbq$-weights, but they are not necessarily the $\bbq$-highest weights of these representations. In particular, we may not be able to choose $P_{\pm}$ to be $\bbq$-parabolic groups. We can however assume that one of $b_1,\dots,b_r$ is positive. It allows us to construct $P_{\pm}$ which satisfy the assumptions of Theorem \ref{thm: weiss for epsilon case}, but it requires some subtleties.  


Let 
\begin{equation}
R=\max\left\{ \sum_{j=1}^{r}\left\langle \chi_{i},\chi_{j}\right\rangle \::\:1\leq i\leq r\right\} .\label{eq:R def}
\end{equation}
Note that the value of $R$ does not depend on the choice of a simple system.

Let $q,p_{1},\dots,p_{r}\in\bbz$, $q\neq0$, satisfy to following for all $i=1,\dots,r$ 
\begin{equation}
\left|qb_{i}-p_{i}\right|<\frac{1}{2Rr}.\label{eq:(bi-pi)}
\end{equation}
The existence of such an approximation follows from Dirichlet's theorem on simultaneous approximation, though the latter is stronger. 
Let 
\begin{equation}
\chi^\prime=\sum_{i=1}^{r}p_{i}\chi_{i}.\label{eq:chi_i decom}
\end{equation}
Then, by taking a better approximation (possibly a larger $q$), we can assume that 
\begin{equation}
p_{1},\dots,p_{r}\ge0.\label{eq:a_i non-neg}
\end{equation}

Since we can assume $\chi'$ is not trivial (again, by possibly taking a smaller $R$), it follows from (\ref{eq:fundamental weights}), (\ref{eq:chi_i decom}), and (\ref{eq:a_i non-neg}) that there exists $1\le i\le r$ such that 
\begin{equation}
\left\langle\chi^\prime,\alpha_{i}\right\rangle =p_{i}>0.\label{eq:inner ai}
\end{equation}
Thus, by replacing $\chi$ with a positive integer multiplication of it, we may also assume that for $i$ and any $l=1,\dots,r$
\begin{equation}
\left\langle\chi^\prime,\alpha_{i}\right\rangle >\left\langle \alpha_{l},\beta\right\rangle .\label{eq:chi,alpha big}
\end{equation}

Let $1\le i\le r$. The character $\chi^\prime-\alpha_{i}$ is a $\bbq$-abstract weight. According to Lemma \ref{lem: exist rho 1}$\left(i\right)$ there exists $w_{i}\in W\left(\Phi_\bbq\right)$ such that $w_{i}\left(\chi^\prime-\alpha_{i}\right)$ is a $\bbq$-abstract dominant weight. Then, according to Lemma \ref{lem: exist rho 1}$\left(ii\right)$, there exists a strongly rational over $\bbq$ representation $\varrho_{i}^{+}:G\rightarrow\mbox{GL}\left(V_{i}^{+}\right)$ (respectively $\varrho_{i}^{-}:G\rightarrow\mbox{GL}\left(V_{i}^{-}\right)$) with a $\bbq$-highest weight $m_{i}^{+}\cdot w_{i}\left(\chi^\prime-\alpha_{i}\right)$ (respectively $\bbq$-lowest weight $m_{i}^{-}\cdot w_{i}\left(-\chi^\prime+\alpha_{i}\right)$) for some $m_{i}^{+},m_{i}^{-}\in\bbn$. 
Let $v_{i}^{\pm}$ be a non-zero weight vector for $\pm\left(\chi^\prime-\alpha_{i}\right)$ in $V_{i}^{\pm}\left(\bbq\right)$ (such a vector exists by Lemma \ref{lem:dim weyl}). 

Let \[
\Psi=\left\{\beta\in\Phi_\bbq\::\:\beta\geq\alpha_{i}\right\}.\]
Then, $\Psi$ is a closed subset of $\Phi_\bbq$. Therefore, by \cite[Prop. 21.9(ii)]{Bor} there are unique closed connected unipotent $\bbq$-subgroups $U_{+}$, $U_{-}$ normalized by $Z\left(S\right)$ (the centralizer of $S$) with Lie algebras  $\bigoplus_{\beta\in\Psi}\frakg_{\beta}$, $\bigoplus_{\beta\in\Psi}\frakg_{-\beta}$, respectively. Let $P_{\pm}=T\cup U_{\pm}$. 

We now want to show that $\varrho_{i}^{\pm}$, $v_{i}^{\pm}$, and $P_{\pm}$ satisfy the assumptions of Theorem \ref{thm: weiss for epsilon case} for $A$. We start by showing $(i)$. 

Let $\left\{ a_{k}\right\} $ be an unbounded sequence in $A$.
After passing to a subsequence $\left\{ a_{k}^{\prime}\right\} \subset\left\{ a_{k}\right\}$, there exist $1\leq\ell,j\leq r$ which satisfy (\ref{eq:a_i, a_j unbounded}). Hence, up to passing to a subsequence, we may assume 
\begin{equation}
\alpha_{\ell}\left(a_{k}^{\prime}\right)=\max_{1\leq i\leq r}\alpha_{i}\left(a_{k}^{\prime}\right)>0\quad\mbox{ and }\quad\alpha_{j}\left(a_{k}^{\prime}\right)=\min_{1\leq i\leq r}\alpha_{i}\left(a_{k}^{\prime}\right)<0.\label{eq:a_i, a_j def}
\end{equation}

By (\ref{eq:fundamental weights}), (\ref{eq:R def}), and (\ref{eq:a_i, a_j def})
for $i=1,\dots,r$ 
\begin{equation}
\chi_{i}\left(a_{k}^{\prime}\right)=\sum_{m=1}^{r}\left\langle \chi_{i},\chi_{m}\right\rangle \alpha_{m}\left(a_{k}^{\prime}\right)\leq R\alpha_{\ell}\left(a_{k}^{\prime}\right),\label{eq:chi_i upper}
\end{equation}
and 
\begin{equation}
\chi_{i}\left(a_{k}^{\prime}\right)=\sum_{m=1}^{r}\left\langle \chi_{i},\chi_{m}\right\rangle \alpha_{m}\left(a_{k}^{\prime}\right)\geq R\alpha_{j}\left(a_{k}^{\prime}\right).\label{eq:chi_i lower}
\end{equation}
Then, it follows from the choice of $\varrho_{\ell}^+$, $v_\ell^+$,  (\ref{eq:chi decomp into chi_i}), (\ref{eq:(bi-pi)}), (\ref{eq:chi_i decom}), and (\ref{eq:chi_i upper})
that
\begin{eqnarray*}
\varrho_{\ell}^{+}\left(a_{k}^{\prime}\right)v_{\ell}^{+} & = & \exp\left[\left(\chi^\prime-\alpha_{\ell}\right)\left(a_{k}^{\prime}\right)\right]\cdot v_{\ell}^{+}\\
 & = & \exp\left[\left(\chi+\sum_{i=1}^{r}\left(p_{i}-b_{i}\right)\chi_{i}-\alpha_{\ell}\right)\left(a_{k}^{\prime}\right)\right]\cdot v_{\ell}^{+}\\
 & \leq & \exp\left[\left(\frac{1}{2Rr}Rr\alpha_{\ell}-\alpha_{\ell}\right)\left(a_{k}^{\prime}\right)\right]\cdot v_{\ell}^{+}\\
 & = & \exp\left[-\frac{1}{2}\alpha_{\ell}\left(a_{k}^{\prime}\right)\right]\cdot v_{\ell}^{+}\underset{k\rightarrow\infty}{\longrightarrow}0.
\end{eqnarray*}
In a similar way, it follows from the choice of $\varrho_{j}^+$, $v_j^+$, (\ref{eq:chi decomp into chi_i}), (\ref{eq:(bi-pi)}), (\ref{eq:chi_i decom}), and (\ref{eq:chi_i lower}) that 
\[
\varrho_{j}^{-}\left(a_{k}^{\prime}\right)v_{j}^{-}\underset{k\rightarrow\infty}{\longrightarrow}0.
\]
Therefore, $\left(i\right)$ is satisfied. 

Next, we show $(ii)$. Let $\beta\in\Psi$ and $1\leq l\leq r$. 
If $\left\langle\alpha_{l},\beta\right\rangle\leq 0$, then \[
\left\langle \chi^{\prime}-\alpha_{l},\beta\right\rangle \geq\left\langle \chi^{\prime},\beta\right\rangle \geq 0.
\]
If $\left\langle\alpha_{l},\beta\right\rangle> 0$, then there exists $1\leq j\leq q$ such that $\alpha_{l},\beta\in\Phi_j$. Since in that case $\beta\geq\alpha_{i_j}$, using (\ref{eq:a_i non-neg}), (\ref{eq:inner ai}), and (\ref{eq:chi,alpha big}) we arrive at
\begin{eqnarray*}
\left\langle \chi^{\prime}-\alpha_{l},\beta\right\rangle  & = & \left\langle \chi^{\prime},\beta\right\rangle -\left\langle \alpha_{l},\beta\right\rangle \\
 & \geq & \left\langle \chi^{\prime},\alpha_{i_j}\right\rangle -\left\langle \alpha_{l},\beta\right\rangle \\
 & > & 0.
\end{eqnarray*}
In view of Lemma \ref{lem:w(chi)+lambda}, we can deduce that in both cases 
\begin{equation}
\pm\left(\chi^{\prime}-\alpha_{l}\right)\pm\beta\notin\Phi_{\varrho_{l}^{\pm}}.\label{eq:lambda stab}
\end{equation}
It then follows from (\ref{eq: lambda acts on chi}) and the definition of $U^{\pm}$ that for $i=1,\dots,r$ the subspace  $V_{\varrho_{i}^{\pm},\pm\left(\chi^{\prime}-\alpha_{i}\right)}$ is invariant under $\varrho_{i}^{\pm}\left(U^{\pm}\right)$. 
Since any $\bbq$-weight space is invariant under $\varrho_{i}^{\pm}(T)$, $V_{\varrho_{i}^{\pm},\pm\left(\chi^{\prime}-\alpha_{i}\right)}$ is invariant under  $\varrho_{i}^{\pm}(P_\pm)$.
By Lemma \ref{lem:dim weyl}, $V_{\varrho_{i}^{\pm},\pm\left(\chi^{\prime}-\alpha_{i}\right)}$ is one dimensional. That is, $\bbr\cdot v_{i}^{\pm}=V_{\varrho_{i}^{\pm},\pm\left(\chi^{\prime}-\alpha_{i}\right)}$. Proving $\left(ii\right)$.

Last, it is easy to see that $\left(iii\right),\left(iv\right),\left(v\right)$ are satisfied, and since $U^{+},U^{-}$ are unipotent radicals of opposite parabolic subgroups of $G$ which are not contained in any proper $\bbq$-factor of $G$, \cite[Prop. 4.11]{key-27} implies
$\left(vi\right)$.%

\end{proof}

\end{document}